\newtheorem{theorem}{Theorem}[section]
\newtheorem{lemma}[theorem]{Lemma}
\newtheorem{corollary}[theorem]{Corollary}
\theoremstyle{definition}
\theoremstyle{remark}
\title{A complete classification of\\ three-dimensional algebras over ${\mathbb R}$ and ${\mathbb C}$ \\-- {\it OnKoChiShin} (visiting old, learn new)}
\author{Yuji Kobayashi, Kiyoshi Shirayanagi, Sin-Ei Takahasi and Makoto Tsukada}
\date{}
\begin{document}

\maketitle

\footnote[0]{{\it Keywords}: Associative algebras, Unital algebras, Curled algebras, Waved algebras, Straight algebras,  Peirce's classification.}
\footnote[0]{{\it 2010 Mathematics Subject Classification}: Primary 16B99; Secondary 16U99}

\footnote[0]{Research of the first, third and fourth authors was supported in part by JSPS KAKENHI Grant Number JP25400120.}
\footnote[0]{Research of the second author was supported in part by JSPS KAKENHI Grant Number JP15K00025.}

\begin{abstract}
We provide a complete classification of three-dimensional associative algebras over the real and
complex number fields based on a complete elementary proof.
  We list up all the multiplication tables 
of the algebras up to isomorphism.  We compare our results with those given
by mathematicians in the 19th century.
\end{abstract}

%\subjclass[2010]{Primary 16B99; Secondary 16U99}
%\keywords{Associative algebras, Unital algebras, Curled algebras, Waved algebras, Straight algebras,  Peirce's classification.}

\section{Introduction}
Since the discovery of the quaternions by Hamilton \cite{H}, 
many mathematicians had been trying to extend number systems to more general systems
such as 
tessarines, coquaternions, biquaternions, 
and algebras introduced by Grassmann and by Clifford etc.
(see Kolmogorov and Yushkevich \cite{K} and van der Waerden \cite{W}). 
These are finite-dimensional associative algebras over the real number field $\mathbb{R}$, 
which were called "hypercomplex number systems" at that time. 
It was Peirce who first studied associative algebras systematically (\cite{P}).  
He enumerated "pure" algebras of low dimension using so called Peirce's decomposition.  
However, not only was his terminology ambiguous and were his proofs not rigorous,  but
he also did not give proofs for non-isomorphism between the listed algebras.
Hawkes \cite{H2} and Taber \cite{T} tried to give rigid reasoning for the methods of Peirce, %%% addition
but their resulting proofs are not satisfactory from the viewpoint of modern mathematics.
On the other hand, Scheffers \cite{Sch} and Study \cite{St} determined unital algebras of low dimension
over $\mathbb{R}$ and the complex number field $\mathbb{C}$.

In this paper, we classify (associative but not necessarily unital) 
algebras of dimension 3 over $\mathbb{R}$ and $\mathbb{C}$ up to isomorphism.  
Our classification is complete and the methods 
are very different from Peirce's. The proofs are rigorous and concrete.  
We first treat unital algebras in a classical way. Then, we divide non-unital algebras 
into three types, {\it curled}, {\it waved} and {\it straight} algebras.
An algebra $A$ is curled if $x$ and $x^2$ are linearly dependent 
for any $x \in A$, it is waved if it is not curled but $x, x^2$ and $x^3$ are
linearly dependent for any $x \in A$, and it is straight otherwise. 
We look into subalgebras of A of dimension 2, and consider all combinations of them 
to list up all possible multiplication tables of algebras of dimension 3.  Finally, we make
a complete list of algebras of dimension 3 that are not isomorphic to each other. 
Our methods are elementary and we use ideal theory as little as possible.
%in the sense that they are rigorous, concrete and elementary. %We use the ideal theory as less as possible.

We appreciate the researches of the abovementioned past mathematicians,
in particular, we respect Peirce for his pioneering work. 
By presenting our new complete classification, we attempt to make the paper as an {\it homage} to his work.  
We believe that the spirit of this paper can be best expressed in the subtitle, consisting of four Chinese characters which are pronounced as "OnKoChiShin".
It is a word from the Analects of Confucius \cite{Wat}, which is commonly used in Japan
as an idiom meaning ``visiting old, learn new", or more precisely,
discovering new things by studying the past through scrutiny of the old.

Our result is as follows.
Over $\mathbb{C}$, up to isomorphism, we have 
5 unital algebras $$U^3_0, U^3_1, U^3_2, U^3_3, U^3_4,$$ and among the non-unital algebras, we have 
5 curled algebras $$C^3_0, C^3_1, C^3_2, C^3_3, C^3_4,$$
4 straight algebras $$S^3_1, S^3_2, S^3_3, S^3_4$$  
and 9 waved algebras $$W^3_1, W^3_2, W^3_4, W^3_5, W^3_6, W^3_7, W^3_8, W^3_9, W^3_{10}$$
and one infinite family $$\big\{W^3_3(k)\big\}_{k\in \mathcal{H}}$$ of waved algebras, where the symbols $U^3_i$, $C^3_i$, $S^3_i$ and $W^3_i$ denote specified
multiplication tables and $\mathcal{H}$ is the right half-plane $\{x+y{\bf i} \,|\, x>0 \ \mbox{or}\ x=0, y\geq 0\}$ of the complex plane.
Over $\mathbb{R}$, in addition to the above algebras, we have one unital algebra 
$U^3_{2^-}$, one non-unital straight algebra $S^3_{3^-}$ and one infinite family 
$\big\{W^3_{3^-}(k)\big\}_{k \geq 0}$ of non-unital waved algebras. The concrete description of the above multiplication tables is given in Section~\ref{sec:summary}.

The rest of this paper is organized as follows. First, we give a classification of two-dimensional algebras in Section~\ref{sec:2dim}, 
because it will be utilized to achieve the aim of classifying three-dimensional algebras.
Next, we determine unital algebras in Section~\ref{sec:unital}.  %The proof technique is classical.
We classify curled algebras in Section~\ref{sec:curled} and straight algebras in Section~\ref{sec:straight}.
In the subsequent three sections, we treat the most difficult case where algebras are waved. % but not curled.
In this case we focus on two subalgebras having their one-dimensional intersection.
We explain the strategy for the classification in Section~\ref{sec:w-strategy}, 
enumerate all possible waved algebras in Section~\ref{sec:w-enumerate},
and check if the algebras enumerated are isomorphic or not in Section~\ref{sec:w-screen}.  
In the final section, we summarize the results and give a complete list of algebras 
modulo isomorphism, and compare it with the lists given by Peirce, Scheffers and Study.

Throughout this paper, we assume that an algebra is associative.

\section{Preliminaries}\label{sec:pre}
Let $K = \mathbb{R}$ or $K = \mathbb{C}$.
In this paper, we express an algebra over $K$ by the multiplication table.
Let $A$ be an algebra over $K$ of dimension $n$ where $\{e_1,\dots,e_n\}$ is a linear bases of $A$. 
Then, the structure of $A$ is determined by the table which is an $n\times n$ matrix
\begin{equation}\label{eq:structure}
\begin{pmatrix}
a_{11} & \cdots & a_{1n} \\
\vdots & \ddots & \vdots \\
a_{n1} & \cdots & a_{nn}
\end{pmatrix}
\end{equation}
with $a_{ij}\in A$, 
where $a_{ij}=e_i e_j$ for all $i\in [1,n]$ and $j\in [1,n]$.
That is,
for $p = \sum_{i=1}^n x_ie_i$, $q = \sum_{i=1}^n y_ie_i\in A$ with
$x_i, y_i \in K$,
the product $pq$ is defined by
$$ pq \  = \sum_{i,j\in [1,n]} x_iy_ja_{ij}. $$
We say that $A$ is the algebra on $\{e_1,\dots,e_n\}$ defined by (\ref{eq:structure}).

We refer to $A$ as a {\it zero algebra} if its multiplication table is the zero matrix with $a_{ij}=0$ for all $i\in [1,n]$ and $j\in [1,n]$, or
equivalently, if $pq=0$ for all $p, q\in A$.
Related to zero algebras, we call $A$ a {\it zeropotent algebra} if $p^2=0$ for all $p\in A$. Obviously a zero algebra is zeropotent, but in general the converse is not true.
In fact, it is easy to show that
$A$ is zeropotent if and only if $A$ is defined by an alternative matrix with $a_{ij}=-a_{ji}$ for all $i\in [1,n]$ and $j\in [1,n]$.
For a classification of zeropotent {\it nonassociative} algebras, see \cite{KSTT,STTK}.

%let $A$ be an algebra over $K$ of dimension 3.
%Let $\{e, f, g\}$ be a linear base of $A$.  
%Then, the structure of $A$ is determined by a table
%\begin{equation}\label{eq:structure}
%\begin{pmatrix}
%a_{11} & a_{12} & a_{13} \\
%a_{21} & a_{22} & a_{23} \\
%a_{31} & a_{32} & a_{33}
%\end{pmatrix}
%\end{equation}
%with $a_{ij} \in A$, \red{where $a_{ij}=e_ie_j$ with $e_1=e$, $e_2=f$ and $e_3=g$.
%That is,}
%for $p = x_1e + x_2f + x_3g, q = y_1e + y_2f + y_3g \in A$ with
%$x_i, y_i \in K$,
%the product $pq$ is defined by
%$$ pq \  = \sum_{i,j\in\{1,2,3\}} x_iy_ja_{ij}. $$
%We call $A$ the algebra on $\{e,f,g\}$ defined by (\ref{eq:structure}).

%% isomorphism criterion
Let us describe a criterion for two algebras to be isomorphic. 
Let $A$ and $B$ be algebras over $K$ of dimension $n$ defined by tables ($n\times n$ matrices), $(a_{ij})$ and $(b_{ij})$, %respectively, 
where $\{e_1,\dots,e_n\}$ and $\{f_1,\dots,f_n\}$ are linear bases of $A$ and $B$, respectively. 
Moreover, let $e_i e_j=a_{ij}=\sum_{s=1}^n\alpha_{ij}^s e_s$ and 
$f_i f_j=b_{ij}=\sum_{t=1}^n\beta_{ij}^t f_t$ for all $i\in [1,n]$ and $j\in [1,n]$.
Suppose that $\varphi: A\rightarrow B$ is an algebra homomorphism and $M_\varphi=(m_{st})$ with $m_{st}\in K$
is the $n\times n$ matrix of $\varphi$ as a linear mapping with respect to the linear bases:
\[\varphi(e_s)=\sum_{t=1}^n m_{st}f_t.\]
Let us call $M_\varphi$ a transformation matrix from $A$ to $B$.
For each $(i,j)$, the equality $\varphi(e_i)\varphi(e_j)=\varphi(e_ie_j)=\varphi(a_{ij})$ holds. %, and the left-hand side
%is 
%under the table $B$ with respect to the base $\{f_1,\dots,f_n\}$,
%we get an algebraic equation in the variables $m_{ij}$'s.
%Let $E_\varphi$ be the system of all the algebraic equations so obtained. Then we have
%\begin{eqnarray*}
%\sum_{k=1}^n m_{ik}f_k \sum_{l=1}^n m_{jl}f_l & = & \sum_{k,l}m_{ik}m_{jl}b_{kl}\\
%                                                         & = & \sum_{t=1}^n(\sum_{k,l}m_{ik}m_{jl}\beta_{klt})f_t,
%\end{eqnarray*}
%and the right-hand side is
%\begin{eqnarray*}
%\sum_{s=1}^n \alpha_{ijs}\sum_{t=1}^n m_{st}f_t & = & \sum_{t=1}^n(\sum_{s}\alpha_{ijs}m_{st})f_t.
%\end{eqnarray*}                               
%Therefore we have
%\begin{quote}
%($\ast$) $A$ is isomorphic to $B$ over $K$ if and only if 
%for the above $\varphi$, both $E_\varphi$ and $|M_\varphi|\neq 0$ can be satisfied in $K$.
%\end{quote}
Calculating the both sides, we see that $A$ is isomorphic to $B$ over $K$ if and only if 
there exists a non-singular matrix $M_{\varphi}$ such that
%\begin{quote}
%$A$ is isomorphic to $B$ over $K$ if and only if there exists a non-singular matrix $M_{\varphi}$ such that
\[\sum_{s}\alpha_{ij}^s\,m_{st}=\sum_{k,l}\beta_{kl}^t\,m_{ik}m_{jl}\mbox{ for all $t,i,j\in [1,n]$.} \,\,\cdots\cdots\cdots \mbox{($\ast$)}\]
%\end{quote}

Assume that $A$ is an algebra over $K$ of dimension 3.
An element $e \in A$ is {\it curled} (resp.\,{\it waved}) if
$\{e, e^2\}$ (resp. $\{e, e^2, e^3 \}$) is linearly dependent over $K$. 
%The algebra $A$ is called curled (resp. waved) 
%if every element of $A$ is curled (resp. waved).  
%%% waved is replaced to kinky?
An algebra $A$ is called {\it curled} if every element of $A$ is curled.
$A$ is called {\it waved} if $A$ is not curled but every element of $A$ is waved.
Note that in a waved algebra there is an element $e$ such that $\{e,e^2\}$ is linearly independent
but every such element $e$ satisfies that $\{e,e^2,e^3\}$ is linearly dependent.
We call $A$ {\it straight}, if it is not curled nor waved, that is, 
if $\{e, e^2, e^3\}$ forms a linear base of $A$ for some $e \in A$.

%Let $B$ be an algebra of dimension 2 with a linear base $\{e,f\}$.
%The structure of $B$ is defined by a table
%$$\begin{pmatrix}
%a_{11} & a_{12} \\
%a_{21} & a_{22} 
%\end{pmatrix}$$
%with $a_{ij} \in B$;
%for $p = x_1e + x_2f, q = y_1e + y_2f \in B$ with
%$x_i, y_i \in K$,
%the product $pq$ is defined by
%$$ pq \  = \sum_{i,j\in\{1,2\}} x_iy_ja_{ij}. $$
%The two-dimensional algebra 
When $B$ is an algebra of dimension 2, $B$ is {\it curled} 
if every element of $B$ is curled, otherwise $B$ is {\it straight}.

For two algebras $A$ and $B$ over $K$, 
$A\oplus B$ denotes their direct sum; 
for $p = (x,y), q = (x',y') \in A\oplus B$ their product is defined by
$pq = (xx', yy')$. 

\section{Algebras of dimension 2}\label{sec:2dim}
Before entering the discussion on two-dimensional algebras, let us consider one-dimensional algebras.
Let $A$ be an algebra over $K$ of dimension 1 with a linear base $\{e\}$.
Then, $e^2=ke$ for some $k\in K$. Depending on whether $k$ is 0 or not, it follows that $A$ is the algebra defined by $e^2=0$ or is isomorphic to the algebra defined by $e^2=e$.
We denote the former algebra by $A^1_0$ and the latter by $A^1_1$. $A^1_0$ is a zero algebra and $A^1_1$ is a unital algebra with the identity element $e$.

Now let $A$ be an algebra over $K$ of dimension 2 with a linear base $\{e, f\}$.

First, suppose that $A$ is curled.  Then, 
\begin{equation}\label{eq:curled}
e^2 = ke \ \, \mbox{and} \ \, f^2 = \ell f
\end{equation}
for some $k, \ell \in K$.  Here if $k \neq 0$ (resp. $\ell \neq 0$), 
replacing $e$ (resp. $f$) by $e/k$ (resp. $f/\ell$),
we may assume that $k$ and $\ell$ are 0 or 1 in (\ref{eq:curled}).

We can write
$$ ef = ae + bf \ \, \mbox{and} \ \, fe = ce + df $$
with $a, b, c, d \in K$.
We have
$$ kae + kbf = kef = e^2f = e(ef) = e(ae + bf) = ae^2 + bef = a(k + b)e + b^2f. $$
It follows that
\begin{equation}\label{eq:ab}
ab = 0 \ \, \mbox{and} \ \, b^2 = kb.
\end{equation}
Similarly we have
\begin{equation}\label{eq:cd}
cd = 0, \, a^2 = \ell a, \, c^2 = \ell c \ \, \mbox{and} \ \, d^2 = k d. 
\end{equation}
We have
$$ kce + ade + bdf = e(ce+df) = efe = (ae + bf)e = kae + bce + bdf.  $$
It follows that
\begin{equation*}
k(a - c) = ad - bc.
\end{equation*}
Similarly we have
\begin{equation*}
\ell(d - b) = ad - bc.
\end{equation*}
Because $A$ is curled,
$xe + yf$ and
$$
\begin{array}{lll}
(xe + yf)^2 & = & x^2e^2 + xyef + xyfe +y^2f^2 \\
& = & (kx^2 + axy + cxy)e + (\ell y^2 + bxy + dxy)f. 
\end{array}
$$
are linearly dependent over $K$ for any $x, y \in K$.
Hence,
$$
\begin{vmatrix}
x & kx^2+(a+c)xy \\
y & \ell y^2+(b+d)xy
\end{vmatrix}
= \big((b+d-k)x + (\ell-a-c)y\big)xy = 0.
$$
It follows that
\begin{equation}\label{eq:kl}
k = b + d \ \, \mbox{and} \ \, \ell = a + c.
\end{equation}

It is easy to solve the equations (\ref{eq:ab}), (\ref{eq:cd}) and (\ref{eq:kl}) for 
$k, \ell \in \{0, 1\}$ and $a, b, c, d \in K$.  We %get 
obtain 7 solutions:
\begin{multline*}
 (k,\ell,a,b,c,d) \,= \,
(0,0,0,0,0,0), \,(0,1,0,0,1,0), \,(0,1,1,0,0,0), \\
(1,0,0,0,0,1), \,(1,0,0,1,0,0), \,(1,1,0,1,1,0),\,(1,1,1,0,0,1). 
\end{multline*}  
%Corresponding them 
In correspondence to them
we have 7 curled algebras 
$A^2_0, A^2_1, A^2_2, A'^2_3, A'^2_4, A'^2_5$ and $A'^2_6$ defined by
$$ \begin{pmatrix}
0 & 0 \\
0 & 0
\end{pmatrix},
\begin{pmatrix}
0 & 0 \\
e & f
\end{pmatrix},
\begin{pmatrix}
0 & e \\
0 & f
\end{pmatrix},
\begin{pmatrix}
e & 0 \\
f & 0
\end{pmatrix},
\begin{pmatrix}
e & f \\
0 & 0
\end{pmatrix},
\begin{pmatrix}
e & f \\
e & f
\end{pmatrix}
\ \mbox{and} \ 
\begin{pmatrix}
e & e \\
f & f
\end{pmatrix},
$$
respectively.
%The algebras $A^2_1$ and $A'^2_4$
%are isomorphic via the isomorphism interchanging $e$ and $f$.
Using the isomorphism criterion in the previous section, the algebras $A^2_1$ and $A'^2_4$
are isomorphic via the transformation matrix
$M_\varphi=\begin{pmatrix}
0 & 1\\
1 & 0
\end{pmatrix}$ which is a non-singular matrix satisfying ($\ast$).
Similarly, $A^2_2$ and $A'^2_3$ are isomorphic.
%The algebra $A'^2_5$ is isomorphic to $A^2_1$
%via the isomorphism sending $e$ to $e+f$ and $f$ to $f$.
The algebra $A'^2_5$ is isomorphic to $A^2_1$
via the transformation matrix 
$\begin{pmatrix}
1 & 1\\
0 & 1
\end{pmatrix}$.
Similarly, $A'^2_6$ is isomorphic to $A^2_2$.
On the other hand, it is easy to see that there is no non-singular matrix satisfying  ($\ast$) between $A^2_1$ and $A^2_2$.
%In this way 
Consequently
we have three non-isomorphic curled algebras $A^2_0, A^2_1$, 
and $A^2_2$.

Next, we suppose that $A$ is straight, that is, there is $g \in A$ such that
$\{g, g^2\}$ is a linear base of $A$.  Then,
$A$ is commutative, and we can write
\begin{equation}\label{eq:g-cube}
g^3 = bg^2 + cg
\end{equation}
with $b, c \in K$.

If $b = c = 0$, that is, $g^3 = 0$, then letting $e = g^2$ and $f = g$, 
$A$ is defined by the table
\begin{equation}\label{eq:000e}
\begin{pmatrix}
0 & 0 \\
0 & e
\end{pmatrix}.
\end{equation}
Next, if $b \neq 0$ and $c = 0$, then
letting $f = g/b$ and $e = f^2$, $A$ is defined by the table
\begin{equation}\label{eq:all-e}
\begin{pmatrix}
e & e \\
e & e
\end{pmatrix}. 
\end{equation}
Lastly, suppose that $c \neq 0$.
Let 
\begin{equation}\label{eq:e}
e = \frac{1}{c}(g^2 - bg). 
\end{equation}
Then, using (\ref{eq:g-cube}), we see  $eg = ge = g$, and so 
$e$ is the identity element of $A$.  Moreover, by (\ref{eq:e}), 
$\{g, e\}$ is a linear base of $A$ and
\begin{equation}\label{eq:g-sq}
g^2 = bg + ce
\end{equation}
holds.  Let $ D = b^2 + 4c$, and let $h = 2g - be$.
Then, by (\ref{eq:g-sq}) we have
\begin{equation}\label{eq:h-sq}
h^2 = 4g^2 - 4bg + b^2e = De. 
\end{equation}
If $K = \mathbb{C}$ and $D \neq 0$, or $K = \mathbb{R}$ and $D > 0$, let
$ f = \frac{1}{\sqrt{D}}h$.
Then, by (\ref{eq:h-sq}) we have
$ f^2 = e$.
Hence, $A$ is defined by
\begin{equation}\label{eq:effe}
\begin{pmatrix}
e & f \\
f & e
\end{pmatrix}. 
\end{equation}
If $K = \mathbb{R}$ and $D < 0$, let $f = \frac{1}{\sqrt{-D}}h$.
Then, 
$f^2 = -e$ by (\ref{eq:h-sq}).
Hence, $A$ is defined by
\begin{equation}\label{eq:-e}
\begin{pmatrix}
e & f \\
f & -e
\end{pmatrix}. 
\end{equation}
If $D = 0$, let $ f = h$,
then, $f^2 = 0$ by (\ref{eq:h-sq}), and $A$ is defined by
\begin{equation}\label{eq:eff0}
\begin{pmatrix}
e & f \\
f & 0
\end{pmatrix}.
\end{equation}

If $A$ is defined by the table in (\ref{eq:all-e}), then by replacing $f$ by $f-e$, 
$A$ is defined also by 
\begin{equation}\label{eq:e000}
\begin{pmatrix}
e & 0 \\
0 & 0
\end{pmatrix}.
\end{equation}
If $A$ is defined by the table in (\ref{eq:effe}), then by replacing $e$ by $\frac{e+f}{2}$ and $f$ by $\frac{e-f}{2}$,
$A$ is defined by
\begin{equation}\label{eq:ef}
\begin{pmatrix}
e & 0 \\
0 & f
\end{pmatrix}.
\end{equation}

We denote the algebras defined by the tables in (\ref{eq:000e}), (\ref{eq:e000}), (\ref{eq:effe}), (\ref{eq:-e})
and (\ref{eq:eff0}) by $A^2_3, A^2_4, A^2_5, A^2_{5^-}$ and $A^2_6$,
respectively.
Using the isomorphism criterion in Section~\ref{sec:pre} it is easy to %see 
show that these algebras are not isomorphic to each other.

\section{Unital algebras}\label{sec:unital}
In this section, $A$ is a unital algebra 
over $K$ of dimension 3 with identity element 1.

First, suppose that there is an element $h$ of $A$ such that
$\{1, h, h^2\}$ forms a linear base of $A$, 
in this case we %call $A$ is ({\it unitally}) {\it straight}.
say that $A$ is {\it unitally straight}.
Then, $A$ is commutative, and we can write
\begin{equation*}
h^3 = ah^2 + bh + c
\end{equation*}
with $a, b, c \in K$.   Thus, the algebra $A$ is generated by $h$ and
is isomorphic to the residue algebra
of the polynomial algebra $K[X]$ modulo the ideal generated by
$$ P(X) = X^3- aX^2 - bX - c.$$  
Let $\alpha, \beta$ and $\gamma$ be 
the roots of $P$ in ${\mathbb C}$;
$$ P(X) = (X - \alpha)(X - \beta)(X -\gamma). $$

(i)  Suppose that $\alpha, \beta$ and $\gamma$ are different 
from each other.

(i1)  Suppose that $\alpha, \beta, \gamma \in K$.
We have an isomorphism 
$ \phi: A \rightarrow K \oplus K \oplus K$
of algebras defined by
$$ \phi(h) = (\alpha, \beta, \gamma). $$
Let $e = \phi^{-1}(1,0,0)$, $f = \phi^{-1}(0,1,0)$ 
and $g = \phi^{-1}(0,0,1)$.  Then, on the base $\{e,f,g\}$, 
$A$ is defined by
\begin{equation}\label{eq:efg}
\begin{pmatrix}
e & 0 & 0\\
0 & f & 0\\
0 & 0 & g
\end{pmatrix}.
\end{equation} 

(i2)  Next, suppose that $K = \mathbb{R}$, and $\alpha$ is real but 
$\beta$ and $\gamma$ are not real.  We have an isomorphism
$\phi: A \rightarrow K \oplus B$ of algebras defined by
$$ \phi(h) = (\alpha, \bar{h}), $$
where $B = K[X]/((X-\beta)(X-\gamma))$ and 
$\bar{h}$ is the image of $h$
by the natural surjection %%
from %%
$A$ to $B$.  By the results in the previous
section, $B$ is defined by 
$\begin{pmatrix}
f' & g' \\
g' & -f'
\end{pmatrix}$ of $B$
on a suitable base $\{f', g'\}$.  Let $e = \phi^{-1}(1,0)$
$f = \phi^{-1}(0,f')$ and $g = \phi^{-1}(0,g')$, then
on the base $\{e,f,g\}$, $A$ is defined by
\begin{equation}\label{eq:e000fg0g-f}
\begin{pmatrix}
e & 0 & 0\\
0 & f & g\\
0 & g & -f
\end{pmatrix}.
\end{equation} 

(ii)  Next, suppose that $\alpha \neq \beta = \gamma$, then
$\alpha, \beta \in \mathbb{R}$ and
$$P(X) = (X - \alpha)(X - \beta)^2. $$
We have an isomorphism $\phi: A \rightarrow K \oplus B$
defined by
$ \phi(h) = (\alpha, \bar{h}),$
here $B = K[X]/((X-\beta)^2)$ and $\bar{h}$ is the natural image of $h$ 
in $B$.  By the results in the previous section $B$ is defined by
$\begin{pmatrix}
f' & g' \\
g' & 0
\end{pmatrix}$
on the base $\{f' , g'\}$ with $f '= 1$ and $g' = \overline{h}-\beta$.
Thus, in the same way as above $A$ is defined by
\begin{equation}\label{eq:e000fg0g0}
\begin{pmatrix}
e & 0 & 0\\
0 & f & g\\
0 & g & 0
\end{pmatrix}.
\end{equation} 
on a suitable base $\{e,f,g\}$.

(iii)  Lastly, suppose that $\alpha = \beta = \gamma$, that is, 
$P(X) = (X-\alpha)^3$.  Let $e = 1$,
$f = h - \alpha$ and $g = f^2. $
Then, $fg = gf = g^2 = 0$, and so $A$ is defined by
\begin{equation}\label{eq:efgfg0g00}
\begin{pmatrix}
e & f & g \\
f & g & 0 \\
g & 0 & 0
\end{pmatrix}.
\end{equation}
 
Using the isomorphism criterion it is easy to show that the algebras defined by (\ref{eq:efg}), (\ref{eq:e000fg0g0}) and (\ref{eq:efgfg0g00}) 
are not isomorphic to each other.  When $K = \mathbb{R}$,
the algebra defined by (\ref{eq:e000fg0g-f}) is not isomorphic to the algebras above either.
  
Next, suppose that $A$ is not unitally straight, that is, 
%any $h \in A \setminus K$ generates the subalgebra of $A$ of dimension 2.  
there is no $h\in A$ such that $\{1,h,h^2\}$ forms a linear base of $A$.
Let $\{1, f, g\}$ be a linear base of $A$.
Then $\{1,f\}$ is a linear base of the unital subalgebra $B$ of $A$ %of dimension 2 
generated by $f$.  
As discussed in the previous section, $B$ is defined by (\ref{eq:effe}) or (\ref{eq:eff0}) 
when $K = \mathbb{C}$.  Hence, by changing $f$ by a suitable element 
of $B$, we may suppose that $f^2 = 0$ or $f^2 = 1$.  
When $K = \mathbb{R}$, there is another possibility $f^2 = -1$, 
because $B$ may be defined by (\ref{eq:-e}).
Similarly, we may suppose that $g^2 = 0$ or $g^2 = 1$ (or $g^2 = -1$
when $K = \mathbb{R}$).

Let 
$fg = a + bf + cg$ \, and \, $gf = a' + b'f + c'g$ 
\, for $a, b, c, a', b', c' \in K$.
If $f^2 = 0$, then we have 
$$ 0 = f^2g = f(a + bf + cg) = af + c(a+bf+cg) = ac + (a+bc)f + c^2g. $$ 
and
$$ 0 = gf^2 = (a'+b'f+c'g)f = a'f + c'(a'+b'f+c'g) 
= a'c' + (a'+b'c')f+ c'^2g. $$
It follows that
\begin{equation}\label{eq:a=c}
a = c = a' = c' = 0.
\end{equation}
Similarly, if $g^2 = 0$, we have
\begin{equation}\label{eq:a=b}
a = b = a' = b' = 0.
\end{equation} 

If $f^2 = 1$, then
$$ g = f^2g = f(a+bf+cg) = af+b+c(a+bf+cg) = ac+b + (a+bc)f + c^2g. $$
Thus we have 
$$ ac + b = a + bc = c^2 - 1 = 0. $$
Similarly, we have
$$ a'c' + b' = a' + b'c' = c'^2 - 1 = 0.$$
Hence, we have
\begin{equation}\label{eq:c=pm1}
c = \pm 1, b = \mp a \ \mbox{(double-sign corresponds)} \ \ \mbox{and} 
\ \ c' = \pm 1, b' = \mp a' \ \mbox{(d-s.c.)}.
\end{equation}
Similarly, if $g^2 = 1$, we have
\begin{equation}\label{eq:b=pm1}
b = \pm 1, c = \mp a \ \mbox{(d-s.c.)}\ \ \mbox{and} 
\ \ b' = \pm1, c' = \mp a' \ \mbox{(d-s.c.)}.
\end{equation}

Finally, if $K = \mathbb{R}$ and $f^2 = -1$, then we have
$$ -g = f^2g = f(a + bf + cg) = af - b + c(a + bf + cg) =
ac - b + (a + bc)f +  c^2g.  $$
Hence, $c^2 = -1$, but this is impossible in $\mathbb{R}$.
Similarly, $g^2 = -1$ is impossible.

(i)  Suppose that $f^2 = g^2 = 0$, then by (\ref{eq:a=c}) and (\ref{eq:a=b})
we have $ a = b = c = a' = b' = c' = 0$.
Hence, $A$ is defined by
\begin{equation}\label{eq:efgf00g00}
\begin{pmatrix}
e & f & g \\
f & 0 & 0 \\
g & 0 & 0
\end{pmatrix}
\end{equation}
on the base $\{e, f, g\}$ with $e = 1$.

(ii)  If $f^2 = 0 $ and $g^2 = 1$, then by (\ref{eq:a=c}) and (\ref{eq:b=pm1})
$$ a = c = a' = c' = 0, \ b = \pm 1 \ \ \mbox{and} \ \ b' = \pm 1. $$
Here, if $b = b' = 1$, that is, $fg = gf = f$, then $1, f +g$ and
$$ (f + g)^2 = f^2 + fg + gf + g^2 =  2f + 1 $$ 
are linearly independent.  Hence, $A$ is unitally straight.
Similarly, the case $b= b' = -1$ gives a unitally straight algebra.

If $b = 1$ and $b' = -1$, then $A$ is defined by
\begin{equation}\label{eq:efgf0fg-fe}
\begin{pmatrix}
e & f & g \\
f & 0 & f \\
g & -f & e
\end{pmatrix} \end{equation}
with $e = 1$.
This is not unitally straight, because $1, x + yf + zg$ and
$$(x+yf+zg)^2 = x^2 + z^2 + 2x(yf + zg) $$
are linearly dependent for any $x, y, z \in K$.

The case $b = -1$ and $b' = 1$ gives the algebra isomorphic to 
the previous algebra %%
defined by (\ref{eq:efgf0fg-fe})
%%
%%via the isomorphism sending $f$ to $-f$ and $g$ to $-g$.
via the transformation matrix
$\begin{pmatrix}
1 & 0 & 0\\
0 & -1 & 0 \\
0 & 0 & -1
\end{pmatrix}$.

(iii)  The case $f^2 = 1, g^2 = 0$ is symmetric to (ii), and yields no new algebra.

(iv)  Suppose that $f^2 = g^2 = 1$.
Because $A$ is not unitally straight, $1,\, h = x + yf + zg$ and 
$$\begin{array}{lll}
h^2 & = & x^2 + y^2 + z^2 + 2xyf + 2xzg + yz(fg+gf) \\
& = & x^2+y^2+z^2+yz(a+a') + y(2x+z(b+b'))f + z(2x+y(c+c'))g
\end{array}$$
are linearly dependent for any $x, y, z \in \mathbb{R}$.
Hence,
$$\begin{vmatrix}
1 & x & x^2+y^2+z^2+yz(a+a') \\
0 & y & y(2x+z(b+b')) \\
0 & z & z(2x+y(c+c'))
\end{vmatrix} 
= yz(y(c+c') - z(b+b')) = 0$$
for any $x,y,z \in \mathbb{R}$.  It follows that
\begin{equation}\label{eq:b+b'}
b + b' = c + c' = 0.
\end{equation}

We have 4 solutions in $a, b, c, a', b', c'$ satisfying (\ref{eq:c=pm1}), (\ref{eq:b=pm1}) and (\ref{eq:b+b'}):
$$\begin{array}{lll}
(a, b, c, a', b', c') & = & 
%(-1,1,1,-1,1,1), 
(-1,1,1,-1,-1,-1), \ 
%(1,1,-1,1,1,-1), 
(1,-1,1,1,1,-1), \\
& & (1,1,-1,1,-1,1), \ 
%(1,-1,1,1,-1,1), 
(-1,-1,-1,-1,1,1). 
%(-1,-1,-1,-1,-1,-1). 
\end{array}$$
 
Now, if $(a,b,c,a',b',c') = (-1,1,1,-1,-1,-1)$, that is, 
$$ fg = -1 + f + g \ \ \mbox{and} \ \ gf = -1 - f - g, $$ 
then let
$f' = f + g$ and $g' = g$.
If $(a,b,c,a',b',c') = (1,-1,1,1,1,-1)$, let
$f' = f - g$ and $g' = -g$.
If $(a,b,c,a',b',c') = (1,1,-1,1,-1,1)$, let
$f' = f - g$ and $g' = g$.
If $(a,b,c,a',b',c') = (-1,-1,-1,-1,1,1)$, let
$f' = f + g$ and $g' = -g$.
In any case we have
$$ f'^2 = 0, \ g'^2 = 1, \ f'g' = f' \ \ \mbox{and} \ \ g'f' = -f'. $$
Therefore, replacing $e, f$ and $g$ by $1, f'$ and $g'$, respectively,
$A$ is defined by (\ref{eq:efgf0fg-fe}).

The algebras defined by (\ref{eq:efgf00g00}) and (\ref{eq:efgf0fg-fe}) are not isomorphic, because 
in the algebra defined by (\ref{eq:efgf00g00}), 1 and $-1$ are the only elements $x$
satisfying $x^2 = 1$, but in the algebra defined by (\ref{eq:efgf0fg-fe}), $g^2 = 1$.

In summary, over $\mathbb{C}$
we have exactly 5 non-isomorphic unital algebras 
$U^3_0$, $U^3_1$, $U^3_2$, $U^3_3$ and $U^3_4$
defined by (\ref{eq:efgf00g00}), (\ref{eq:efgf0fg-fe}), (\ref{eq:efg}), (\ref{eq:e000fg0g0}) and (\ref{eq:efgfg0g00}),
respectively.
Over $\mathbb{R}$ we have exactly 6 non-isomorphic unital algebras, 
the algebras defined by the same tables as above
and another algebra $U^3_{2^-}$ defined by (\ref{eq:e000fg0g-f}).

\section{Curled algebras}\label{sec:curled}
In this section we classify curled algebras over $K$ of dimension 3.

Let $C^3_0, C^3_1, C^3_2, C^3_3$ and $C^3_4$ be the algebras on  a base
$\{e, f, g\}$ defined by
\begin{equation}\label{eq:curled-list}
\begin{pmatrix}
0 & 0 & 0 \\
0 & 0 & 0 \\
0 & 0 & 0
\end{pmatrix},
%\begin{pmatrix}
%0 & g & 0 \\
%-g & 0 & 0 \\
%0 & 0 & 0
%\end{pmatrix},
\begin{pmatrix}
0 & 0 & 0 \\
0 & 0 & e \\
0 & -e & 0
\end{pmatrix},
\begin{pmatrix}
0 & 0 & 0 \\
e & f & 0 \\
0 & g & 0
\end{pmatrix},
\begin{pmatrix}
0 & 0 & 0 \\
0 & 0 & 0 \\
e & f & g
\end{pmatrix}
\ \mbox{and} \
\begin{pmatrix}
0 & 0 & e \\
0 & 0 & f \\
0 & 0 & g
\end{pmatrix},
\end{equation} 
respectively.
It is easy to check that these are actually curled (associative) algebras.

We claim that these algebras are not isomorphic to each other.
For an algebra $A$ over $K$, define the square and the left (and right) 
annihilator of $A$ by
$$ A^2 = \left\{ \textstyle{\sum^n_{i=1} x_iy_j \,|\, x_i, y_j \in A, n >0} \right\}, $$
and
$$ {\rm la}(A) = \{x \in A \,|\, xA = 0 \} , \ {\rm ra}(A) = \{x \in A \,|\, Ax = 0 \}, $$
respectively. Obviously these are subalgebras of $A$.
Let
$$ \alpha(A) = \dim_K A^2,\ \beta(A) = \dim_K {\rm la}(A), \ 
\gamma(A) = \dim_K {\rm ra}(A). $$
Clearly, if $A$ and $A'$ are isomorphic, then $\alpha(A) = \alpha(A')$,
$\beta(A) = \beta(A')$ and $\gamma(A) = \gamma(A')$.  
By an easy calculation, we see
$$ \alpha(C^3_0) = 0, \ \alpha(C^3_1) = 1 \ \ \mbox{and} \ \ 
\alpha(C^3_2) = \alpha(C^3_3) = \alpha(C^3_4) = 3, $$
$$ \beta(C^3_2) = 1, \ \beta(C^3_3) = 2 \ \ \mbox{and} \ \ 
\beta(C^3_4) = 0, $$
and
$$ \gamma(C^3_2) = 1, \ \gamma(C^3_3) = 0 \ \ \mbox{and} \ \ 
\gamma(C^3_4) = 2. $$
Thus, our algebras can not be isomorphic to each other, although in this case we need not the
information about the values of $\gamma$.

Next, we shall prove that any curled algebra is isomorphic to one of
the algebras listed in (\ref{eq:curled-list}).
We %classify 
divide cases depending on whether $\{e, f, ef\}$ or $\{e, f, fe\}$
forms a base of $A$ or not.

(a)  Suppose that there are elements $e$ and $f$ of $A$ such that
$\{e, f, ef\}$ is linearly independent.  
Because $A$ is curled,
\begin{equation}\label{eq:e^2}
e^2 = ke \ \ \mbox{and} \ \ f^2 = \ell f 
\end{equation}
for some $k, \ell \in K$.  If $k \neq 0$ (resp. $\ell \neq 0$), by
replacing $e$ (resp. $f$) by $e/k$ (resp. $f/\ell$), we may suppose that
$k$ and $\ell$ are equal to 0 or 1.
Set 
\begin{equation}\label{eq:g}
g = ef,
\end{equation}
then $\{e, f, g\}$ is a linear base of $A$.  
Because $A$ is curled
$$ (e+f)^2 = ke + \ell f + g + fe = m(e+f) $$
for some $m \in K$.  Hence,
\begin{equation}\label{eq:fe}
fe = (m-k)e + (m-\ell)f - g.
\end{equation}
Moreover, we have
\begin{equation}\label{eq:eg}
eg = eef = kef = kg,
\end{equation} 
\begin{equation}\label{eq:gf}
gf = eff = \ell ef = \ell g,
\end{equation}
Therefore,
\begin{equation}\label{eq:ge}
ge = efe = e((m-k)e + (m-\ell)f - g) = k(m-k)e + (m-k-\ell)g
\end{equation}
\begin{equation}\label{eq:fg}
fg = fef = ((m-k)e + (m-\ell)f - g)f = \ell(m-\ell)f + (m-k-\ell)g
\end{equation}
and
\begin{equation}\label{eq:g^2}
g^2 = gef = (k(m-k)e + (m-k-\ell)g)f = ((k+\ell)(m-k) + \ell^2)g.
\end{equation}
Because $e+g$ and
$$ (e+g)^2 = k(m-k+1)e + ((k+\ell)(m-k) + m -\ell^2-\ell)g $$
are linearly dependent, we have
$$ k(m-k+1) = (k+\ell)(m-k) + m -\ell^2-\ell, $$
that is,
$$
(\ell+1)(m - k - \ell) = 0.
$$
Because $\ell \neq -1$, we have $m = k+\ell$.
Thus, (\ref{eq:fe}), (\ref{eq:ge}), (\ref{eq:fg}) and (\ref{eq:g^2}) become
\begin{equation}\label{eq:fe-ge-g^2}
fe = \ell e + k f - g, \ ge = k\ell e, \ fg = k\ell f, 
\ \ \mbox{and} \ \ g^2 = k\ell g.
\end{equation}

(a1)  If $k = \ell = 0$, then
by (\ref{eq:e^2}), (\ref{eq:g}), (\ref{eq:eg}), (\ref{eq:gf}) and (\ref{eq:fe-ge-g^2}) we have
$$ ef = g, fe = -g \ \ \mbox{and} \ \ e^2 = f^2 = eg = gf = ge = fg = g^2 = 0. $$
Let $e' = -g$ and $g' = e$.  Then we have
$e'^2 = e'f = e'g' = fe' = g'e = 0$, $fg' = e'$ and $g'f = -e'$.
Hence, replacing $e$ by $e'$ and $g$ by $g'$, $A$ is defined by
the second table in (\ref{eq:curled-list}), and is isomorphic to  $C^3_1$.

(a2)  If $k = 0$ and $\ell = 1$, then we have
$$  f^2 = f, \ ef = gf = g, \ fe = e-g \ \ \mbox{and} 
\ \ e^2 = eg = ge = fg = g^2 = 0. $$
Let $e' = e-g$, then we have 
$$e'^2 = (e-g)^2 = e^2-eg-ge+g^2 = 0,\ e'f = ef-gf = 0,\ e'g = eg-g^2 = 0$$
and
$$ fe' = fe-fg = e-g = e',\ ge' = ge-g^2 = 0.$$  
Hence, replacing $e$ by $e'$, $A$ is defined by
the third table in (\ref{eq:curled-list}), and is isomorphic to $C^3_2$.

(a3)  If $k = 1$ and $\ell = 0$, then we have 
$$  e^2 = e, \ ef = eg = g, \ fe = f-g\ \ \mbox{and} 
\ \ f^2 = gf = ge = fg = g^2 = 0. $$
Let $e' = g, f' = e, g' = f-g$.  Then, we have
$$ e'^2 = g^2 = 0,\ e'f' = ge = 0,\ e'g' = gf - g^2 = 0,\ f'e' = eg = g = e',$$
$$ f'^2 = e^2 = e = f',\ f'g' = ef-eg = 0,\ g'e' = fg-g^2 = 0,\ g'f' = fe-ge = f-g = g'$$
 and 
$$g'^2 = f^2-fg-gf+g^2 = 0.$$
Thus, $A$ is isomorphic to $C^3_2$ again.

(a4)  Finally, if $k = \ell = 1$, then we have
$$ e^2 = ge = e, \ f^2 = fg = f, \ ef = eg = gf = g^2 = g \ \ 
\mbox{and} \ \ fe = e+f - g. $$
Let $e' = e-g$ and $g' = g-f$.  Then, we have
$$e'^2=e'f=e'g'=fg'=g'e'=g'^2 = 0,\ fe' = e'\ \ \mbox{and}\ \ g'f = g'.$$
Therefore, $A$ is isomorphic to $C^3_2$ in this case too.

(b)  Suppose that $\{e, f, fe\}$ is linearly independent for some 
$e, f \in A$.
As we get (34) above, we can get $ef = k' e+\ell'f-fe$ with
$k', \ell' \in K$.  Thus, $\{e, f, ef\}$ is also linearly independent.
Therefore, this case is included in case (a).

(c)  Suppose that for any linearly independent elements $e$ and $f$ 
of $A$, both $\{e, f, ef\}$ and $\{e, f, fe\}$ are linearly dependent.
Let $\{e, f, g\}$ be a linear base of $A$, 
and let $B$ be the subalgebra of $A$ generated by $\{e, f\}$.  Because
$ef$ and $fe$ %are contained 
lie in the space spanned by $\{e, f\}$,
$B$ is spanned by $\{e, f\}$ and is two-dimensional. 

As discussed in Section~\ref{sec:2dim}, there are exactly three non-isomorphic 
curled algebras of dimension 2;  
they are defined by 
\begin{equation}\label{eq:2dim-curled}
\begin{pmatrix}
0 & 0 \\
0 & 0
\end{pmatrix},
\begin{pmatrix}
0 & 0 \\
e & f
\end{pmatrix}
\ \ \mbox{and}\ \ 
\begin{pmatrix}
0 & e \\
0 & f
\end{pmatrix}.
\end{equation}
Hence, we may assume that the subalgebra $B$ is defined by one of the 
tables in (\ref{eq:2dim-curled}).  Because $A$ is curled, 
\begin{equation}\label{eq:g^2=kg} g^2 = kg \end{equation}
for some $k \in K$.  Here we may suppose that $k$ is equal to 0 or 1.

(c1)  Suppose that $B$ is defined by the first table in (\ref{eq:2dim-curled}) 
on the base $\{e,f\}$.
Since $\{e, g, eg\}$ is linearly dependent, $eg = ae+bg$ for some
$a, b \in K$.  Because $e^2 = 0$, we have
$$ 0 = e^2g = e(ae+bg) = b(ae+bg) = abe + b^2g. $$
Hence, $b = 0$ and so $eg = ae$.  By this last equality and (\ref{eq:g^2=kg}) we have
$$ kae = keg = eg^2 = aeg = a^2e. $$
Hence, $a = 0$ or $a = k$.  Similarly, we see that 
$ge = a'e, fg = bf, gf = b'f,$ 
and $a', b$ and $b'$ are equal to $0$ or $k$. 
Thus, if $k = 0$, then $a = a' = b = b' = 0$, and hence, 
$eg = ge = fg = gf = g^2 = 0$.  Hence, $A$ is isomorphic to the zero
algebra $C^3_0$.

Next, suppose that $k = 1$.  Then, $g^2 = g$ and $a,a',b,b' \in \{0,1\}$.
Because $A$ is curled, we have
$$ \ell(e+g) = (e+g)^2 = (a+a')e + g $$
for some $\ell \in K$.  Hence, $a + a' = \ell = 1$. 
Similarly, we have $b + b' = 1$.  So we we have four possibilities
$$ (a,a',b,b') = (0,1,0,1), \ (0,1,1,0), \ (1,0,0,1) \ \ \mbox{and}
\ \ (1,0,1,0). $$
%satisfying $a,a',b,b' \in \{0, 1\}$ and $a+a' = b+b' = 1$.
In correspondence to them we have the algebras defined by

\begin{equation}\label{eq:curled-1}
\begin{pmatrix}
0 & 0 & 0 \\
0 & 0 & 0 \\
e & f & g
\end{pmatrix},\  
%\end{equation}
%\begin{equation}
\begin{pmatrix}
0 & 0 & 0 \\
0 & 0 & f \\
e & 0 & g
\end{pmatrix},\  
%\end{equation}
%\begin{equation}
\begin{pmatrix}
0 & 0 & e \\
0 & 0 & 0 \\
0 & f & g
\end{pmatrix}
%\end{equation}
\ \ \mbox{and} \ \ 
%\begin{equation}
\begin{pmatrix}
0 & 0 & e \\
0 & 0 & f \\
0 & 0 & g
\end{pmatrix}.
\end{equation}

The algebra defined by the first table in (\ref{eq:curled-1}) is nothing but $C^3_3$ 
and the algebra defined by the last table is $C^3_4$.
The algebra defined by the second table is isomorphic to
the algebra $C^3_2$ %via the isomorphism interchanging $f$ and $g$.
via the transformation matrix 
$\begin{pmatrix}
1 & 0 & 0 \\
0 & 0 & 1 \\
0 & 1 & 0
\end{pmatrix}$.
The algebra defined by the third table is also isomorphic
to $C^3_2$ %via the isomorphism sending $e$ to $g$, $f$ to $e$ and $g$ to $f$.
via  
$\begin{pmatrix}
0 & 0 & 1 \\
1 & 0 & 0 \\
0 & 1 & 0
\end{pmatrix}$.

(c2)  Suppose that $B$ is defined by the second table in (\ref{eq:2dim-curled}).
As above we have
\begin{equation}\label{eq:eg=ae}
eg = ae,\, ge = a'e,\, a + a' = k\ \ \mbox{with}\ \ 
a, a' \in \{ 0, k \}.
\end{equation} 
Because $\{f, g, fg\}$ is linearly dependent, $fg = bf + cg$ for some
$b, c \in K$.  We have
$$ bf+cg = fg = f^2g = f(bf+cg) = bf + c(bf+cg) = b(c+1)f + c^2g $$
and 
$$ kbf+kcg = kfg = fg^2 = (bf+cg)g = b(bf+cg)+kcg = b^2f+(b+k)cg. $$
Hence, 
\begin{equation}
bc = c(c-1) = b(b-k) = 0.
\end{equation}
Since
$$ 0 = efg = e(bf+cg) = ace $$
and
$$ a'e = a'fe = fge = (bf+cg)e = be + a'ce $$
by (\ref{eq:eg=ae}), we have
\begin{equation}
ac = 0 \ \ \mbox{and} \ \ b = a'(1-c).
\end{equation}

Let $gf = b'f + c'g$ with $b', c' \in K$.  As above we have
\begin{equation}
b'c' = c'(c'-1) = b'(b'-k) = 0.
\end{equation}
and
\begin{equation} ac' = 0 \ \ \mbox{and} \ \ b' = a'(1-c'). \end{equation}
Because $A$ is curled, for any $y , z \in K$, $yf+zg$ and
$$\begin{array}{lll}
(yf+zg)^2 & = & y^2f+kz^2g+yz(bf+cg)+yz(b'f+c'g) \\
& = & (y+(b+b')z)yf + (kz+(c+c')y)zg
\end{array}$$
are linearly dependent.  Hence
$$ (y+(b+b')z)yz = (kz+(c+c')y)yz $$
holds.  Because $y$ and $z$ are arbitrary, it follows that 
\begin{equation}\label{eq:b+b'=k}
b+b' = k \ \ \mbox{and} \ \ c+c' = 1. 
\end{equation}

Now, if $k = 0$, then by (\ref{eq:eg=ae}) -- (\ref{eq:b+b'=k}), we see that
$$ 
a = a' = b = b' = 0 \ \ \mbox{and \ either} \ c = 0, c' = 1  
\ \mbox{or} \ c = 1, c' = 0. $$ 
In the first case, $eg = ge = fg = gg = 0$ and $gf = g$, 
In the second case, $eg = ge = gf = gg = 0$ and $fg = g$.
Hence, $A$ is defined by
\begin{equation}\label{eq:curled-2a}
\begin{pmatrix}
0 & 0 & 0 \\
e & f & 0 \\
0 & g & 0
\end{pmatrix}
\ \ \mbox{or} \ \ 
\begin{pmatrix}
0 & 0 & 0 \\
e & f & g \\ 
0 & 0 & 0
\end{pmatrix}.
\end{equation}
In the first case $A$ is nothing but $C^3_2$.
In the second case, %interchanging $f$ and $g$, 
$A$ is isomorphic to $C^3_3$
via 
$\begin{pmatrix}
1 & 0 & 0 \\
0 & 0 & 1 \\
0 & 1 & 0
\end{pmatrix}$.

Next, if $k = 1$, then again by (\ref{eq:eg=ae}) -- (\ref{eq:b+b'=k}), we see that
$$ a=0,\, a' = 1 \ \ \mbox{and \ either} \ (b = c' = 0,\, b' = c = 1)\ 
\mbox{or}\ (b = c' = 1,\, b' = c = 0). $$
Hence, $A$ is defined by
\begin{equation}\label{eq:curled-2b}
\begin{pmatrix}
0 & 0 & 0 \\
e & f & g \\
e & f & g
\end{pmatrix}
\ \ \mbox{or} \ \ 
\begin{pmatrix}
0 & 0 & 0 \\
e & f & f \\
e & g & g
\end{pmatrix}.
\end{equation}
The algebra defined by the first table in (\ref{eq:curled-2b}) is isomorphic to $C^3_3$
%via the isomorphism sending $e$ to $e$, $f$ to $g-f$ and $g$ to $g$.
via 
$\begin{pmatrix}
1 & 0 & 0 \\
0 & -1 & 1 \\
0 & 0 & 1
\end{pmatrix}$.
The algebra defined by the second table in (\ref{eq:curled-2b}) is isomorphic to $C^3_2$ 
%via the isomorphism sending $e$ to $e$, $f$ to $f$ and $g$ to $f-g$.
via 
$\begin{pmatrix}
1 & 0 & 0 \\
0 & 1 & 0 \\
0 & 1 & -1
\end{pmatrix}$.

(c3)  Suppose that $B$ is defined by the last table.
This case is symmetric with (c2), and from (\ref{eq:curled-2a}) and (\ref{eq:curled-2b}) we have four curled algebras
defined by
\begin{equation}\label{eq:curled-3}
\begin{pmatrix}
0 & e & 0 \\
0 & f & g \\
0 & 0 & 0
\end{pmatrix},\ 
\begin{pmatrix}
0 & e & 0 \\
0 & f & 0 \\
0 & g & 0
\end{pmatrix},\ 
\begin{pmatrix}
0 & e & e \\
0 & f & f \\
0 & g & g
\end{pmatrix}
\ \ \mbox{and}\ \ 
\begin{pmatrix}
0 & e & e \\
0 & f & g \\
0 & f & g
\end{pmatrix}.
\end{equation}
The algebra defined by the first table in (\ref{eq:curled-3}) is isomorphic to $C^3_2$
%by interchanging $e$ and $g$.  
via 
$\begin{pmatrix}
0 & 0 & 1 \\
0 & 1 & 0 \\
1 & 0 & 0
\end{pmatrix}$.
The algebra defined by the last table
is also isomorphic to $C^3_2$ via the isomorphism 
%sending $e$ to $g$, $f$ to $f$ and $g$ to $f-e$.
via 
$\begin{pmatrix}
0 & 0 & 1 \\
0 & 1 & 0 \\
-1 & 1 & 0
\end{pmatrix}$.
The algebras defined by the second and third tables are
isomorphic to $C^3_4$ in the same way as we see above in (c2).

(d)  By the above case classification, 
we have proved that there are exactly five
non-isomorphic curled algebras $C^3_0, C^3_1, C^3_2, C^3_3$
and $C^3_{4}$ defined by the tables in (\ref{eq:curled-list}).

\section{Straight algebras}\label{sec:straight}
Let $A$ be a straight algebra over $K$ of dimension 3, and let
$h$ be an element of $A$ such that
$\{h, h^2, h^3 \}$ forms a linear base of $A$.  Then,
$A$ is commutative, and we can write
\begin{equation}\label{eq:h^4}
h^4 = ah^3 + bh^2 + ch
\end{equation}
with $a, b, c \in K$. 

If $c \neq 0$, let 
$$ e = \frac{1}{c}(h^3 - ah^2 - b h), $$
then $he = eh=h$.  Hence, $e$ is the identity element and $A$ is a
unital algebra. 
Because we already have classified unital algebras in Section~\ref{sec:unital}, 
we suppose that $c = 0$.  Then (\ref{eq:h^4}) becomes
\begin{equation}\label{eq:h^4-new}
h^4 = ah^3+bh^2.
\end{equation}

Here, if $a = b = 0$, then $h^4 = 0$.  Let $e = h, f = h^2$ and $g = h^3$,  then
on the base $\{e,f,g\}$, $A$ is defined by 
\begin{equation}\begin{pmatrix}\label{eq:fg0g00000}
f\ & g & 0 \\
g\ & 0 & 0 \\
0\ & 0 & 0 
\end{pmatrix}.\end{equation}

Next, suppose that $a \neq 0$ and $b = 0$.  Then $h^4 = ah^3$.
Let
$e = \frac{h^3}{a^3},$
then we see that
$\displaystyle \frac{h^n}{a^n} = e $ for all $n \geq 3$.
Let $f' = \frac{h}{a}$ and $g '= f'^2 = \frac{h^2}{a^2}$.
Then, we have
$$ e^2 = ef' = f'e = g'^2 = eg' = g'e = f'g' = g'f'= e. $$
Set $f = f'-e$ and $g = g'-e$.
We thus have
$$ ef= ef'-e^2 = 0,\ eg = eg'-e^2 = 0,\ fe = f'e-e^2 = 0, $$ 
$$ f^2 = f'^2-f'e-ef'+e^2 = g'-e = g,\ fg = f'g'-f'e-eg'+e^2 = 0 $$
and
$$ ge = g'e-e^2 = 0,\ gf = g'f'-g'e-ef'+e^2 = 0,\ g^2 = g'^2-g'e-eg'+e^2 = 0. $$
Hence, $A$ is defined by 
\begin{equation}\begin{pmatrix}\label{eq:eg0}
e\ & 0 & 0 \\
0\ & g & 0 \\
0\ & 0 & 0 
\end{pmatrix}.\end{equation}

Finally, suppose that $b \neq 0$.
Let 
$$g = h^3 - ah^2-bh, $$
then  $hg = gh = 0$ by (\ref{eq:h^4-new}), and so $Ag = gA = 0$.

Let
\begin{equation}\label{eq:e-expression} e = \frac{(a^2 + b)h^2 - a h^3}{b^2}, \end{equation}
then,  by (\ref{eq:h^4-new}) and (\ref{eq:e-expression}) we have
$eh^2 = h^2e = h^2$, $e^2 = e$ and
\begin{equation}\label{eq:(h^2)^2} (h^2)^2 = (a^2+2b)h^2 - b^2e. \end{equation}
Hence, the subspace $B = K\{h^2, h^3\}$ spanned by $h^2$ and $h^3$ 
is a unital straight algebra of dimension 2 over $K$ generated by the
element $h^2$ satisfying (\ref{eq:(h^2)^2}).  
Let $D = a^2 + 4b$.  

If $K = \mathbb{C}$ and $D \neq 0$ or $K = \mathbb{R}$
and $D > 0$, then by the discussion in Section~\ref{sec:2dim}, with
a suitable element $f$ of $B$, $B$ is defined by (\ref{eq:effe}) on
the base $\{e, f\}$.  Therefore, $A$ is defined by
\begin{equation}\label{eq:ef0fe0000}\begin{pmatrix}
e\ & f & 0 \\
f\ & e & 0 \\
0\ & 0 & 0 
\end{pmatrix}.\end{equation}
If $K = \mathbb{R}$ and $D < 0$, then $B$ is defined by (\ref{eq:-e}), and so
$A$ is defined by
\begin{equation}\label{eq:ef0f-e0000}\begin{pmatrix}
e\ & f & 0 \\
f\ & -e & 0 \\
0\ & 0 & 0 
\end{pmatrix}.\end{equation}
If $ D = 0$, $B$ is defined by (\ref{eq:eff0}) and $A$ is defined by
\begin{equation}\label{eq:ef0f00000}\begin{pmatrix}
e\ & f & 0 \\
f\ & 0 & 0 \\
0\ & 0 & 0  
\end{pmatrix}.\end{equation}

We denote the algebras defined by (\ref{eq:fg0g00000}), (\ref{eq:eg0}), (\ref{eq:ef0fe0000}), (\ref{eq:ef0f-e0000}) and (\ref{eq:ef0f00000}) 
by
$S^3_1$, $S^3_2$, $S^3_3$, $S^3_{3^-}$ and $S^3_4$, respectively.
The algebra $S^3_1$ has no nonzero idempotent, 
but the other algebras have one.  Their left annihilators are the 
same $I = Kg$ which is a two-sided ideal.  The residue algebras of 
$S^3_2$, $S^3_3$, $S^3_4$ and $S^3_{3^-}$ 
by $I$ is the two-dimensional algebras defined by
\begin{equation}\label{eq:residue}
\begin{pmatrix}
e & 0  \\
0 & 0
\end{pmatrix},\ 
\begin{pmatrix}
e & f  \\
f & e  
\end{pmatrix},\ 
\begin{pmatrix}
e & f  \\
f & 0   
\end{pmatrix}
\ \mbox{and} \ 
\begin{pmatrix}
e & f  \\
f & -e  
\end{pmatrix}, 
\end{equation}
respectively.  The algebras defined by the first three
tables in (\ref{eq:residue}) are not isomorphic over $\mathbb{C}$ to each other.
Over $\mathbb{R}$, the algebra defined by the last table is not isomorphic
to any other algebras.
Therefore, $S^3_1$, $S^3_2$, $S^3_3$ and $S^3_4$ are 
non-isomorphic over $\mathbb{C}$, and 
$S^3_{3^-}$ is another non-isomorphic algebra over $\mathbb{R}$. 

\section{Waved algebras (strategy)}\label{sec:w-strategy}
In this section and the following two sections we study waved algebras.
Let $A$ be a waved algebra over $K$ of dimension 3 
%which is not curled nor unital.
which is not unital.
Then, there is a non-curled element $f \in A$, such that $\{f, f^2, f^3\}$ 
is linearly dependent, that is, $\{f, f^2\}$ is a linear base of the 
subalgebra $A'$ of $A$ generated by $f$.  

Take $g \in A$ so that $\{f, f^2, g\}$ forms a linear base of $A$.
Here, if $g$ is curled, that is, $g^2 = kg$ for some $k \in K$, then let
$g' = \ell f + g$ with $\ell \in K$.
Write $fg + gf = af + bf^2 + cg$ with $a, b, c \in K$.  
Choose $\ell$ so that $\ell \neq 0$ and $\ell \neq -b$, then $g'$ and
$$g'^2 = \ell^2 f^2 + \ell(fg + gf) + g^2 
= \ell(\ell+b)f^2 + \ell af + (\ell c + k)g $$
are linearly independent, that is, $g'$ is not curled.  
Hence, from the beginning we may assume that $g$ is not curled and 
$\{g, g^2\}$ is a linear base of the subalgebra $A''$ of $A$ generated 
by $g$.  
The subalgebras $A'$ and $A''$ are straight algebras of dimension 2, 
and the intersection $A' \cap A''$ is a subalgebra of $A$ of dimension 1.

From the results in Section~\ref{sec:2dim}, we have exactly 4 straight non-isomorphic
algebras $A^2_3$, $A^2_4$, $A^2_5$ and $A^2_6$ of dimension 2 
over $\mathbb{C}$ defined by
$$
\mbox{(a)}\,
\begin{pmatrix}
0 & 0 \\
0 & e
\end{pmatrix},\ 
\mbox{(b)}\,
\begin{pmatrix}
e & 0 \\
0 & 0
\end{pmatrix},\ 
\mbox{(c)}\,
\begin{pmatrix}
e & f \\
f & e
\end{pmatrix},\ 
\ \mbox{and \ (d)} \ 
\begin{pmatrix}
e & f \\
f & 0
\end{pmatrix},
$$
respectively.
Over $\mathbb{R}$, in addition to the above algebras, %adding to the algebras defined by the tables above,
we have the algebra $A^2_{5^-}$ defined by
$$
(r) \ 
\begin{pmatrix}
e & f \\
f & -e
\end{pmatrix}.
$$

Note that these algebras are all commutative.
%By replacing $e$ by $\frac{e+f}{2}$ and $f$ by $\frac{e-f}{2}$,
As mentioned in Section~\ref{sec:2dim}, 
$A^2_5$ is also defined by (\ref{eq:ef}).

The algebra $A^2_3$ has the unique subalgebra of dimension 1, 
which is the subalgebra $Ke$ generated by $e$ and is isomorphic to %the \red{\it zeropotent} algebra $A^1_0$ \red{where $e^2=0$.}
$A^1_0$.
$A^2_4$ has the unique subalgebra $Kf$ of dimension 1 %that is
isomorphic to $A^1_0$ and the unique subalgebra $Ke$ of dimension 1 %that is 
isomorphic to %the unit algebra $A^1_1 = K$. 
$A^1_1$.
From easy calculations
$A^2_5$ has the exactly 3 subalgebras $Ke$, $K(e+f)$ and $K(e-f)$ 
of dimension 1, which are all isomorphic to $A^1_1$.
If we choose the table in (\ref{eq:ef}) for $A^2_5$, $K(e+f)$, $Ke$ and K$f$ are
the three subalgebras isomorphic to $A^1_1$. 
$A^2_6$ has the exactly 2 subalgebras $Ke \cong A^1_1$ and 
$Kf \cong A^1_0$ of dimension 1. 
$A^2_{5^-}$ has the unique subalgebra $Ke \cong A^1_1$ of dimension 1.

Our strategy for classifying waved algebras is that we check through all 
possible combinations of the subalgebras $A'$ and $A''$.

\begin{lemma}\label{lem:eg}
Let $A'$ and $A''$ be straight subalgebras of dimension 2 of 
an algebra $A$ of dimension 3.  Let $\{e, f\}$ and $\{e', g\}$ be 
linear bases of $A'$ and $A''$, respectively.  
Suppose that $\{e,f,g\}$ forms a linear base of $A$ and 
$A' \cap A'' = Ke = Ke'$ holds,
that is, $e = ke'$ for some $k \in K\setminus\{0\}$.  Then we have

(1)  $eg = ge = 0$ if $e'g = 0$. 

(2)  $eg = ge = e$ if $e'g = e'$.

(3)  $eg = ge = kg$ if $e'g = g$.

\end{lemma}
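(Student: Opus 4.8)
The plan is to exploit the commutativity of the two-dimensional straight algebra $A''$, which was established in Section~\ref{sec:2dim}, in order to collapse both one-sided products $eg$ and $ge$ into a single scalar multiple of the product $e'g$ taken inside $A''$. First I would record two facts. Since $A''$ is a straight algebra of dimension $2$ on the base $\{e',g\}$, it is commutative, so $e'g = ge'$ as elements of $A''$, and since $A''$ is a subalgebra of $A$ this equality holds in $A$ as well; moreover $e'g$ lies in $A''$. Second, the hypothesis $A'\cap A'' = Ke = Ke'$ gives the scalar relation $e = ke'$ with $k \in K\setminus\{0\}$.

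Using the scalar-linearity of the multiplication together with these two facts, I would compute, in a way that is uniform across all three cases,
\[
eg = (ke')g = k(e'g), \qquad ge = g(ke') = k(ge') = k(e'g),
\]
where the final equality in the second chain is exactly where commutativity of $A''$ enters. This already yields $eg = ge = k(e'g)$ before any case distinction is made, so the two-sidedness asserted in the lemma is obtained for free.

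With the identity $eg = ge = k(e'g)$ in hand, the three assertions follow by substituting the hypothesized value of $e'g$. In case (1), $e'g = 0$ gives $eg = ge = 0$; in case (2), $e'g = e'$ gives $eg = ge = ke' = e$, invoking $e = ke'$ a second time; and in case (3), $e'g = g$ gives $eg = ge = kg$. No associativity beyond that guaranteeing $A''$ is closed under the product is required.

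I do not expect a genuine obstacle here: the substantive content is entirely the observation that commutativity of $A''$ forces $eg$ and $ge$ to coincide with $k(e'g)$. The only place where care is warranted is the scalar bookkeeping, in particular correctly identifying $ke'$ with $e$ in case (2); keeping the nonzero factor $k$ consistent through the three substitutions is the sole point at which an error could plausibly arise.
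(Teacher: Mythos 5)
Your proposal is correct and follows essentially the same route as the paper: both arguments use $e=ke'$ together with bilinearity to reduce $eg$ and $ge$ to $k(e'g)$, invoking the commutativity of the straight two-dimensional algebra $A''$ to handle $ge$, and then substitute the hypothesized value of $e'g$ in each of the three cases. The only cosmetic difference is that you factor out the identity $eg=ge=k(e'g)$ before the case split, whereas the paper carries it out case by case.
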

\begin{proof}
(1)  We have
$eg = ke'g = 0$ if $e'g=0$.  Moreover, $ge = kge' = ke'g = 0$ because
$A''$ is commutative from the tables (a), (b), (c), (d) and (r).

(2)  We have 
$eg = ge = ke'g = ke' = e$ if $e'g = e'$.  

(3)  We have
$eg = ge = ke'g = kg$ if $e'g = g$.
\end{proof}

\begin{lemma}\label{lem:e'^2}
Suppose the same situation as in Lemma~\ref{lem:eg}.

(1)  If $e^2 = 0$, then $e'^2 = 0$, and $ef, fe, eg, ge \in Ke$.

(2)  If $e^2 = e$ and $e'^2 = e'$, then $k = 1$ and $e = e'$.
\end{lemma}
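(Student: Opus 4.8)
The plan is to treat the two parts separately, in each case exploiting only the single relation $e=ke'$ with $k\in K\setminus\{0\}$ together with associativity; the explicit tables (a)--(d) and (r) will not be needed.

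For part (1) the first step is immediate: since $e=ke'$ we have $e^{2}=k^{2}e'^{2}$, and as $K$ is a field with $k\neq 0$ we get $k^{2}\neq 0$, so $e^{2}=0$ forces $e'^{2}=0$. The second step is to pin down the products. I would write $ef=\alpha e+\beta f$ with $\alpha,\beta\in K$ and compute, using associativity, $e(ef)=(ee)f=e^{2}f=0$; expanding the left side gives $e(ef)=\alpha e^{2}+\beta ef=\beta(\alpha e+\beta f)$, so $\alpha\beta e+\beta^{2}f=0$ and, since $\{e,f\}$ is independent, $\beta=0$ and $ef=\alpha e\in Ke$. The symmetric computation $(fe)e=f(ee)=fe^{2}=0$ yields $fe\in Ke$ in the same way. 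Because $e'^{2}=0$ as well, the identical argument inside $A''$ (with base $\{e',g\}$) shows $e'g,\,ge'\in Ke'$; multiplying by the scalar $k$ and using $e=ke'$ then gives $eg=k(e'g)\in Ke'=Ke$ and $ge=k(ge')\in Ke$, completing part (1).

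Part (2) is a one-line scalar computation. From $e=ke'$ and the hypotheses $e^{2}=e$, $e'^{2}=e'$ we get $ke'=e=e^{2}=k^{2}e'^{2}=k^{2}e'$, hence $(k^{2}-k)e'=0$. Since $e'\neq 0$ and $k\neq 0$, this forces $k^{2}=k$, i.e.\ $k=1$, and therefore $e=e'$.

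I do not anticipate a genuine obstacle here; the only point requiring care is the membership claims $ef,fe,eg,ge\in Ke$ in part (1). The tempting route is to invoke the classification of two-dimensional straight algebras and check each of (a)--(d) and (r) by hand, which is workable but tedious; the cleaner path I have outlined extracts $\beta=0$ purely from associativity and $e^{2}=0$, so no case analysis on the tables is necessary. The only small subtlety is to apply the argument twice---once in $A'$ for $ef,fe$ and once in $A''$ for $e'g,ge'$---and then transport the latter back through the scalar relation $e=ke'$.
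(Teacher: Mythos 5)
Your proof is correct and follows essentially the same route as the paper: both parts rest on writing $ef=\alpha e+\beta f$ (resp.\ $e'g=a'e'+b'g$) and using associativity with $e^2=0$ to kill the coefficient of $f$ (resp.\ $g$), then transporting through $e=ke'$; part (2) is the identical scalar computation $ke'=e^2=k^2e'$. The paper's proof is the same argument, merely phrased with ``similarly'' where you spell out the application inside $A''$.
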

\begin{proof}
(1)  Because $e = ke'$ with $k\neq 0$, we see that $e^2 = 0$ if and only if $e'^2 = 0$. 
Because $ef \in A'$ we can write $ef = ae+bf$ with $a, b \in K$.  
If $e^2 = 0$, we have
$$ 0 = e^2f = e(ae+bf) = b(ae+bf) = abe+b^2f. $$
It follows that $b = 0$ and we see $ef \in Ke$.
Similarly, $fe \in Ke$ and $eg, ge \in Ke' = Ke$. 

(2)  If $e^2 = e$ and $e'^2 = e'$, we have 
$$ ke' = e = e^2 = kee' = k^2e'^2 = k^2e'. $$
Because $k \neq 0$, we get $k = 1$ and $e = e'$.
\end{proof}

\begin{corollary}\label{cor:unital}
If $e^2=e$, $e'^2=e'$, $ef=f$ and $e'g = g$, then $A$ is unital.
\end{corollary}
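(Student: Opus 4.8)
The plan is to exhibit the idempotent $e$ as the two-sided identity of $A$ and thereby conclude that $A$ is unital. The first step is to collapse the two separate sets of hypotheses (one pair living in $A'$, the other in $A''$) onto a single element. Since $e^2 = e$ and $e'^2 = e'$, Lemma~\ref{lem:e'^2}(2) applies and forces $k = 1$ and $e = e'$. With this identification the four assumptions become statements about one idempotent: $e^2 = e$, $ef = f$, and $eg = g$, where the last relation comes from $e'g = g$ together with $e = e'$.

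It then remains to check that $e$ acts as a two-sided identity on the basis $\{e,f,g\}$. The left action is immediate from the (now unified) hypotheses: $ee = e$, $ef = f$, and $eg = g$. For the right action I would appeal to commutativity. By the classification in Section~\ref{sec:2dim}, every two-dimensional straight algebra is commutative, so both $A'$ and $A''$ are commutative. Hence within $A'$ we get $fe = ef = f$, and within $A''$ we get $ge = ge' = e'g = g$. Thus $e$ is simultaneously a left and a right identity on each basis vector.

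Because $\{e,f,g\}$ spans $A$ and the multiplication is bilinear, it follows that $ex = xe = x$ for every $x \in A$, so $e$ is the identity element of $A$ and $A$ is unital.

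The argument is largely bookkeeping, and I do not expect a genuine obstacle. The only real content is the appeal to Lemma~\ref{lem:e'^2}(2), which is precisely what lets the idempotency and absorption conditions stated separately for $A'$ and $A''$ coalesce around the single element $e$; without the identification $e = e'$ the two halves of the hypothesis would not visibly interact. The one point requiring care is to obtain the right-hand relations $fe = f$ and $ge = g$ from the commutativity of the two-dimensional subalgebras rather than assuming them among the hypotheses, and to avoid conflating $e$ with $e'$ before the identification has been established.
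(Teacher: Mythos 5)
Your proof is correct and follows essentially the same route as the paper: invoke Lemma~\ref{lem:e'^2}(2) to identify $e=e'$, use commutativity of the two-dimensional straight subalgebras to get the right-hand relations $fe=f$ and $ge=g$, and conclude that $e$ is a two-sided identity. The only cosmetic difference is that the paper cites Lemma~\ref{lem:eg}(3) for $eg=ge=g$, whereas you rederive it directly from commutativity of $A''$ — which is exactly how that lemma is proved anyway.
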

\begin{proof}
Since $A'$ is commutative, $fe = f$.
By  Lemma~\ref{lem:e'^2}, (2) and Lemma~\ref{lem:eg}, (3)  we have $eg = ge = g$. 
These imply that $e$ is the identity element of $A$.
\end{proof}

\begin{lemma}\label{lem:c&c'}
In the same situation as in Lemma~\ref{lem:eg}, let $fg = ae+bf+cg$
and $gf = a'e+b'f+c'g$ with
$a, b, c,a', b', c' \in K$.

(1)  If $f^2 = 0$, then $c = c' = 0$, and moreover, 
if $ef \neq 0$ or $e^2 \neq 0$, 
then $a = a' = 0$.  If $g^2=0$,  then $b=b'=0$, and moreover, 
if $eg \neq 0$ or $e'^2 \neq 0$, then $a = a' = 0$.

(2)  If $f^2=e$ and $eg \in Ke$ (in particular $e^2=0$), then $c=c'=0$
and $eg = aef + be = a'ef + b'e$. 
If $g^2=e'$ and $ef \in Ke$ (in particular $e^2  = 0$), then $b=b'=0$ and 
$ef = kaeg + ce = ka'ef + c'e$.

(3)  If $f^2 = f$, then $c = 0$ or $c = 1$, and $c'=0$ or $c'=1$.  
If $g^2 = g$, then $b = 0$ or $b = 1$, and $b'=0$ or $b'=1$.  
\end{lemma}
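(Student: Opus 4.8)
The entire lemma is a family of consequences of associativity, applied to the two triple products $f\cdot f\cdot g$ and $g\cdot f\cdot f$ (for the hypotheses on $f^2$) and, symmetrically, to $f\cdot g\cdot g$ and $g\cdot g\cdot f$ (for the hypotheses on $g^2$). The plan is to evaluate each such triple product in two ways: bracketing on one side lets me replace $f^2$ (resp.\ $g^2$) by its assumed value, while bracketing on the other side forces me to expand $fg=ae+bf+cg$ and $gf=a'e+b'f+c'g$. Throughout I will use that $A'$ and $A''$ are subalgebras, so $ef$ and $fe$ lie in $A'=Ke+Kf$ and $eg,ge$ lie in $A''=Ke+Kg$; since these subalgebras are commutative I also have $ef=fe$ and $eg=ge$. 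In particular $ef,fe$ have no $g$-component and $eg,ge$ have no $f$-component, which is the closure fact I will lean on repeatedly.

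For the first claim of (1) I would expand $f(fg)=a(fe)+b\,f^2+c(fg)$; with $f^2=0$ the middle term drops, $a(fe)$ contributes nothing in the $g$-direction, and $c(fg)$ contributes $c^2g$, whereas $(ff)g=0$. Comparing the coefficient of $g$ gives $c^2=0$, hence $c=0$, and the symmetric computation $(gf)f=g(ff)=0$ gives $c'=0$. The same mechanism handles (2) and (3). Under $f^2=e$ with the hypothesis $eg\in Ke$, the right side $(ff)g=eg$ has no $g$-component, so reading the $g$-coordinate of $f(fg)$ again yields $c^2=0$; under $f^2=f$ the right side $(ff)g=fg$ has $g$-coordinate $c$, so the comparison becomes $c^2=c$, i.e.\ $c\in\{0,1\}$. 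In every case the primed assertion follows by running associativity on $g\cdot f\cdot f$ instead. The statements involving $g^2$ are entirely symmetric: they come from $(fg)g=f(g^2)$ and $g(gf)=(g^2)f$, and there one reads off the coefficient of $f$ rather than of $g$.

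Once $c=0$ in (2), the surviving part of the identity $eg=(ff)g=f(fg)=a(fe)+b\,f^2=a(ef)+be$ is exactly the asserted formula $eg=a\,ef+be$; the companion $eg=a'\,ef+b'e$ comes from $ge=(gf)f$ after $c'=0$. The symmetric formulas for $ef$ under $g^2=e'$ follow the same way after a rescaling: using $e=ke'$ and $g^2=e'=e/k$, the relation $ef=k(fg)g=k\,f(g^2)$ delivers $ef=ka\,eg+ce$, and likewise from $g(gf)$.

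The only genuinely fiddly points are the ``moreover'' clauses of (1), where I must extract $a=a'=0$ from the disjunctive hypothesis ``$ef\neq 0$ or $e^2\neq 0$''. Here the quadratic trick no longer applies, so I would instead multiply the already-simplified relation $fg=ae+bf$ (valid once $c=0$) by a suitable factor on the correct side. Multiplying by $f$ gives $f(fg)=(ff)g=0$ on one hand and $a(fe)$ on the other, so $a\cdot ef=0$ and hence $a=0$ whenever $ef\neq 0$; if instead $ef=0$ but $e^2\neq 0$, I use $(ef)g=e(fg)=a\,e^2$ to get $a\,e^2=0$ and again $a=0$. The matching argument for $a'$ uses $(gf)f$ and $(gf)e=g(fe)$. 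The analogous clause under $g^2=0$ (after $b=b'=0$) is obtained by multiplying by $g$ or by $e$, passing between the two disjuncts via $eg=ge$ from Lemma~\ref{lem:eg} and noting $e'^2\neq0\iff e^2\neq0$ since $e=ke'$ with $k\neq0$. I expect these case splits—selecting left versus right multiplication and the right auxiliary factor in each disjunct—to be the main obstacle; every other assertion reduces to a single coefficient comparison.
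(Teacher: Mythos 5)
Your proof is correct and follows essentially the same route as the paper's: the paper's entire argument also rests on expanding the associativity identity $f^2g=f(fg)$ (and its companions $gf^2$, $fg^2$, $efg$) and comparing components using that $ef,fe\in A'$ and $eg,ge\in A''$ with both subalgebras commutative. The only difference is organizational—you extract $c=0$ from the $g$-coefficient first and then re-multiply for the ``moreover'' clauses, while the paper reads both conclusions off a single expanded equation—so nothing of substance separates the two.
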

\begin{proof}
We have
\begin{equation}\label{eq:f^2g}
f^2g = f(ae+bf+cg)  =  afe + bf^2 + cae + cbf + c^2g. 
\end{equation}

(1)  If $f^2 = 0$, then (\ref{eq:f^2g}) becomes
$$ 0 = afe + cae + cbf + c^2g. $$
Since $afe+cae+cbf \in A'$ and $\{e,f,g\}$ is linearly independent,
we have $c = 0$ and $afe = 0$.  Hence, $a = 0$ if $ef\,(= fe) \neq 0$.
If $ef = 0$ and $e^2 \neq 0$, then $0 = efg = e(ae + bf) = ae^2$ and
we see $a = 0$. Similarly, we have $c' = 0$, and $a' =0$ if $ef \neq 0$ or $e^2 \neq 0$.

Similarly, if $g^2=0$, then $b=b'=0$, and moreover, if $eg \neq 0$  or
$e'^2 \neq 0$, then $a = a' = 0$.

(2)  If $f^2=e$, then (\ref{eq:f^2g}) becomes
$$ eg = afe+(b+ca)e + cbf + c^2g. $$
Here, if $eg \in Ke$ (this holds if $e^2=0$ by Lemma~\ref{lem:e'^2}, (1)), we see $c^2g \in A'$.
Hence, $c=0$ and $eg = aef + be$.  Similarly, we have $c' = 0$ and
$eg = ge = a'ef + b'e$.  The case $g^2=e'$ is similar.

(3) If $f^2=f$, then by (\ref{eq:f^2g}) we have
\begin{equation}\label{eq:ae+bf+cg}
ae+bf+cg = fg = f^2g = afe + ace + b(c+1)f + c^2g. 
\end{equation}
Hence, $c(c-1)g \in A'$ and we have $c = 0$ or $c = 1$.
Similarly, we have $c' = 0$ or $c' = 1$.
If $g^2 = g$, then we have
\begin{equation}\label{eq:ae+bf+cg2}
ae+bf+cg = fg^2 = aeg + abe + c(b+1)g + b^2f,
\end{equation}
and hence $b = 0$ or $b = 1$.
\end{proof}

\begin{lemma}\label{lem:fg=gf}
In the same situation as in Lemma~\ref{lem:eg},
if $ef = 0$ and $e'g = g$ or if $e'g = 0$ and $ef = f$,
then $fg = gf = 0$.
\end{lemma}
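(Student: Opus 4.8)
The plan is to exploit associativity together with the action of the common element $e$, which is completely pinned down by the hypotheses and by Lemma~\ref{lem:eg}. The two cases are symmetric, so I would carry out the first, $ef=0$ and $e'g=g$, in detail and then mirror it for the second.

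First I would record the only two structural inputs needed. Since every two-dimensional straight algebra is commutative, $A'$ is commutative, so $fe = ef = 0$. Applying Lemma~\ref{lem:eg}(3) to the hypothesis $e'g=g$ gives $eg = ge = kg$. The key step is then to regroup each triple product so that the absorbing factor sits on the inside. Using associativity and $fe=0$,
\[ k\,(fg) = f(kg) = f(eg) = (fe)g = 0, \]
and since $k\neq 0$ this forces $fg=0$. Symmetrically, using $ef=0$ and $ge=kg$,
\[ k\,(gf) = (kg)f = (ge)f = g(ef) = 0, \]
so $gf=0$, which settles the first case.

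For the second case, $e'g=0$ and $ef=f$, I would invoke Lemma~\ref{lem:eg}(1) to get $eg=ge=0$, while commutativity of $A'$ gives $fe=ef=f$. The same regrouping trick then yields immediately
\[ fg = (fe)g = f(eg) = 0 \quad\text{and}\quad gf = g(ef) = (ge)f = 0. \]

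I do not expect a genuine obstacle here: the whole argument reduces to a single application of associativity in each product, once one observes that $e$ either annihilates $f$ or scales $g$ on the appropriate side. The only point requiring care is the bookkeeping of the scalar $k$ coming from $e=ke'$, and invoking the part of Lemma~\ref{lem:eg} that matches the hypothesis on $e'g$ currently in force; keeping the products left-right ordered (rather than silently assuming commutativity between $f$ and $g$) is what makes both $fg$ and $gf$ vanish separately.
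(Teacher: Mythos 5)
Your proof is correct and is essentially the paper's own argument: both rely on commutativity of $A'$ together with Lemma~\ref{lem:eg} to rewrite triple products such as $(fe)g = f(eg) = k(fg)$ and $(ge)f = g(ef) = k(gf)$, then cancel $k \neq 0$. The only difference is that you write out the second case explicitly, which the paper dismisses as ``similar.''
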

\begin{proof}
If $ef = 0$, and $e'g = g$, then by Lemma~\ref{lem:eg}, (3) we have
$  0 = efg = feg = kfg$
and 
$ 0 = gef = kgf. $
Because $k \neq 0$, we see $fg = gf = 0$.
The case where $e'g = 0$ and $ef = f$ is similar. 
\end{proof}

\begin{lemma}\label{lem:g^2=g}
Let $B$ be a straight subalgebra of $A$ of dimension 2 
and let $g$ be an element of $A\setminus B$
such that $Bg = gB = \{0\}$.  Then, $A$ is a straight algebra, if

(1)  $g^2 = g$, or

(2)  $g^2=0$ and $B$ is isomorphic to $A^2_5$, $A^2_6$ or $A^2_{5^-}$.
\end{lemma}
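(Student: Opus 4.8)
The plan is to exhibit a single element $h$ whose powers $\{h, h^2, h^3\}$ form a basis of $A$; this is exactly the definition of straightness. Since $B$ is straight of dimension $2$, there is a generator $b \in B$ with $\{b, b^2\}$ a basis of $B$, and we may write $b^3 = s\, b + t\, b^2$ for some $s, t \in K$. As $g \notin B$, the set $\{b, b^2, g\}$ is a basis of $A$, and the hypothesis $Bg = gB = \{0\}$ makes every product of $g$ with $b$ or $b^2$ vanish. I would look for $h$ among the elements $h = \mu b + g$ with $\mu \in K$, and compute $h^2$ and $h^3$ using these vanishing products together with the prescribed value of $g^2$.

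In case (1), where $g^2 = g$, a short computation gives $h^2 = \mu^2 b^2 + g$ and $h^3 = \mu^3 b^3 + g = \mu^3 s\, b + \mu^3 t\, b^2 + g$. Expressing $h, h^2, h^3$ in the basis $\{b, b^2, g\}$, the transition determinant equals $\mu^3(1 - \mu t - \mu^2 s)$. The factor $1 - \mu t - \mu^2 s$ is a nonzero polynomial in $\mu$ (its constant term is $1$), so it has at most two roots; since $K$ is infinite I can choose $\mu \neq 0$ avoiding them, and then $\{h, h^2, h^3\}$ is a basis. Hence $A$ is straight.

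In case (2), where $g^2 = 0$, the same computation collapses to $h^2 = \mu^2 b^2$ and $h^3 = \mu^3 b^3 = \mu^3 s\, b + \mu^3 t\, b^2$, both lying in $B$; the transition determinant is now $-\mu^5 s$. Thus straightness via this $h$ hinges entirely on the linear coefficient $s$ being nonzero, and this is the crux of the argument. For the three admissible algebras one checks directly that a generator with $s \neq 0$ exists: in $A^2_5$ take $b = f$, so $b^2 = e$ and $b^3 = f = b$ (giving $s = 1$); in $A^2_{5^-}$ take $b = f$, so $b^2 = -e$ and $b^3 = -f = -b$ (giving $s = -1$); and in $A^2_6$ take $b = e + f$, so $b^2 = e + 2f$ and $b^3 = -b + 2b^2$ (giving $s = -1$). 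With such a $b$ and $\mu = 1$ the determinant is $-s \neq 0$, so $\{h, h^2, h^3\}$ is a basis and $A$ is straight.

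The main obstacle is precisely the appearance of $s$ in case (2): for the excluded straight algebras $A^2_3$ and $A^2_4$ every generator satisfies $b^3 \in K b^2$, that is $s = 0$, so $h^2$ and $h^3$ are both proportional to $b^2$ and the construction degenerates inside $B$. The restriction to $A^2_5$, $A^2_6$ and $A^2_{5^-}$ — the unital straight two-dimensional algebras — is exactly what guarantees a generator with $s \neq 0$ (equivalently, a generator that is invertible in $B$, since the matrix of left multiplication by $b$ on $\{b, b^2\}$ has determinant $-s$), and this is the feature the proof must exploit.
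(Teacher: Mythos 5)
Your proof is correct, and its core construction --- perturbing an element of $B$ by $g$ and exploiting $Bg = gB = \{0\}$ to control the powers --- is the same as the paper's. In case (2) the two arguments essentially coincide: the paper picks the very same generators ($f$ in $A^2_5$ and $A^2_{5^-}$, and $e+f$ in $A^2_6$) and checks that $x^2$ and $x^3$ are linearly independent, which is precisely your condition $s \neq 0$, since in the basis $\{x,x^2\}$ that independence is the nonvanishing of $\det\begin{pmatrix} 0 & s \\ 1 & t \end{pmatrix} = -s$. The genuine difference is in case (1): the paper exhibits an explicit element $x$ for each of the five tables (namely $f$, $2e+f$, $2f$, $2e+f$, $f$ for $A^2_3, A^2_4, A^2_5, A^2_6, A^2_{5^-}$) such that $x^2-x$ and $x^3-x$ are linearly independent, and then verifies directly that $x+g$, $(x+g)^2 = x^2+g$, $(x+g)^3 = x^3+g$ are independent (the coefficient of $g$ forces $a+b+c=0$, and independence of $x^2-x$, $x^3-x$ finishes it); you instead take an arbitrary generator $b$, form $h = \mu b + g$, and make the transition determinant $\mu^3(1-\mu t - \mu^2 s)$ nonzero by a generic choice of $\mu$. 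Your version of (1) is more uniform --- no case analysis over the five types of straight $B$ at all --- at the (here harmless) cost of invoking the infinitude of $K$, which holds since $K = \mathbb{R}$ or $\mathbb{C}$; the paper's version is field-independent but requires the per-table verifications. Both are complete proofs, and your closing remark correctly isolates why $A^2_3$ and $A^2_4$ must be excluded in (2): every generator of those algebras has $s=0$, so the construction collapses into $B$.
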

\begin{proof}
Because $B$ is straight, it is defined by (a), (b), (c), (d) or (r) above.

(1)  Suppose that $g^2=g$.  Note that there is an element $x$ in $B$
such that $x^2-x$ and $x^3-x$ are linearly independent.
In fact, $f$, $2e+f$, $2f$, $2e+f$ and $f$ are such elements 
in the algebras defined by (a), (b), (c), (d) and (r), respectively.
For such an element $x$ of $B$, we claim that $x+g$ is not waved.
In fact, if 
$$ a(x+g) + b(x+g)^2 + c(x+g)^3 = ax + bx^2 + cx^3 + (a+b+c)g = 0 $$
for $a, b, c \in K$, then, $a+b+c = 0$ because $g \notin B$.
Hence, $b(x^2-x) + c(x^3-x) = 0$.  Consequently, $a=b=c=0$.
Hence, $A$ is straight.

(2) Suppose that $g^2 = 0$ and $B$ isomorphic to $A^2_5$, $A^2_6$ or $A^2_{5^-}$.  
Note that $B$ has an element $x$ such that $x^2$ and $x^3$ are linearly independent.
In fact, $f$, $e+f$ and $f$ are such elements in the algebras defined by
(c), (d) and (r),  respectively. 
For such an element $x$ of $B$, 
$x+g$, $(x+g)^2 = x^2$ and $(x+g)^3 = x^3$ are linearly independent
because $g \notin B$.  Hence, $A$ is straight.
\end{proof}

\section{Waved algebras (enumeration)}\label{sec:w-enumerate}
Let $A'$ and $A''$ be straight subalgebras of dimension 2 of a waved algebra $A$
such that $A'\cap A''$ is one-dimensional, which is isomorphic to 
either $A^1_0$ or $A^1_1$.
We shall advance the discussion by case division as follows: (a) $A'\cong A^2_3$ and its subdivisions
(aa) $A''\cong A^2_3$, (ab) $A''\cong A^2_4$, (ac) $A''\cong A^2_5$ and (ad) $A''\cong A^2_6$;
(b) $A'\cong A^2_4$ and its subdivisions (bb) $A''\cong A^2_4$, (bc) $A''\cong A^2_5$
and (bd) $A''\cong A^2_6$; (c) $A'\cong A^2_5$ and its subdivisions (cc) $A''\cong A^2_5$
and (cd) $A''\cong A^2_6$; (dd) $A'\cong A''\cong A^2_6$; and finally in the case where $K={\mathbb R}$
(r) $A'\cong A^2_{5^-}$ and its subdivisions (ra) $A''\cong A^2_3$, (rb) $A''\cong A^2_4$, (rc) $A''\cong A^2_5$,
(rd) $A''\cong A^2_6$ and (rr) $A''\cong A^2_{5^-}$.

(a) \ Suppose that $A'$ is isomorphic to $A^2_3$.  Let 
$\{e,f\}$ be a linear base of $A'$ such that $e^2=ef=fe=0$ and $f^2 = e$.

(aa) \ Suppose that $A''$ are also isomorphic to $A^2_3$, and
let $\{e',g\}$ be linear base of $A''$ such that 
$e'^2=e'g=ge'=0$ and $g^2=e'$. 
Because $Ke$ and $Ke'$ are the unique subalgebras of dimension 1
of $A'$ and $A''$, respectively, we see $A' \cap A'' = Ke = Ke'$.  
Hence, $e' = \ell e$ for some $\ell \in K \setminus\{0\}$.

By Lemma~\ref{lem:eg}, (1) we have
$$ eg = ge = 0, $$
and by Lemma~\ref{lem:c&c'}, (2) we see 
$$ fg = ae \ \ \mbox{and} \ \ gf = a'e $$
for some $a, a' \in K$.

Set $g' = a f - g$, then we have
\begin{equation}\label{eq:eg'}
eg' = aef-eg = 0 \ \ \mbox{and} \ \ g'e = afe - ge = 0,
\end{equation}
\begin{equation}
g'^2 = a^2f^2 + g^2 - a(fg+gf) 
= a^2e + \ell e - a(a + a')e = (\ell - aa')e, 
\end{equation}
\begin{equation}
fg' = a f^2 - fg = ae - ae = 0 
\end{equation}
and
\begin{equation}\label{eq:g'f}
g'f = a f^2 - gf = (a - a')e. 
\end{equation}

(aa1)  If $aa' = \ell$ and $a = a'$, then by (\ref{eq:eg'}) -- (\ref{eq:g'f}) we have
$eg' = g'e  = g'^2 = fg' = g'f = 0.$
Thus, replacing $g$ by $g'$, $A$ is defined by
\begin{equation}\label{eq:0e0}
\begin{pmatrix}
0 & 0 & 0 \\
0 & e & 0 \\
0 & 0 & 0
\end{pmatrix}.\end{equation}  

(aa2)  If $aa' = \ell$ and $a \neq a'$, then replacing $g$ by $g'/(a - a')$,
$A$ is defined by
\begin{equation}\begin{pmatrix}\label{eq:e0e0}
0 & 0 & 0 \\
0 & e & 0 \\
0 & e & 0
\end{pmatrix}.\end{equation} 

Now suppose that $aa' \neq \ell$.
 
(aa3)  If $K = \mathbb{C}$, or $K = \mathbb{R}$ and $\ell - aa' > 0$, 
then letting $g'' = g'/\sqrt{\ell-aa'}$, we have
$$ g''^2 = e, \ eg'' = g''e = fg'' = 0 \ \ \mbox{and} \ \ g''f 
= \frac{a - a'}{\sqrt{\ell-aa'}}e. $$
Hence, replacing $g$ by $g''$ and letting $k = (a-a')/\sqrt{\ell-aa'}$, 
$A$ is defined by
\begin{equation}\label{eq:e0kee}
\begin{pmatrix}
0 & 0 & 0 \\
0 & e & 0 \\
0 & ke & e
\end{pmatrix},
\end{equation}
where $k$ can be an arbitrary element of $K$.

(aa4) If $K = \mathbb{R}$ and $\ell - aa' < 0$, then letting
$g'' = g'/\sqrt{aa'-\ell}$, we have
$$ g''^2 = -e, \ eg'' = g''e = fg'' = 0 \ \ \mbox{and} \ \ g''f 
= \frac{a-a'}{\sqrt{aa' - \ell}}e. $$
Hence, replacing $g$ by $g''$ and letting $k = (a-a')/\sqrt{aa' - \ell}$, 
$A$ is defined by
\begin{equation}\label{eq:e0ke-e}
\begin{pmatrix}
0 & 0 & 0 \\
0 & e & 0 \\
0 & ke & -e
\end{pmatrix},
\end{equation}
where $k \in \mathbb{R}$.

(ab)  Suppose that $A''$ is isomorphic to $A^2_4$ and 
let $\{e', g\}$ a linear base of $A''$ such that
$e'^2 = e'g = ge' = 0$ and $g^2 = g$.
Then, $A' \cap A'' = Ke = Ke'$.

By Lemma~\ref{lem:eg}, (1), $ eg = ge = 0,$
and by Lemma~\ref{lem:c&c'}, (2) we have $fg = ae+bf$ with $a, b \in K$ and  
$$ 0 = eg = aef+be = be. $$
Hence, $b = 0$.  Moreover, we have
$$ ae = fg = fg^2 = aeg = 0. $$
Hence, $a = 0$ and so $fg = 0$.  Similarly, we have $gf = 0$.

Thus, $A'g = gA' = \{0\}$ and $g^2 = g$.  Hence, $A$ is not waved by 
Lemma~\ref{lem:g^2=g}, (1), and we exclude this algebra (actually, $A$
is isomorphic to $S^3_2$).

(ac) Suppose that $A''$ is isomorphic to $A^2_5$. However, this case is
impossible because $A'$ has the unique subalgebra of dimension 1 isomorphic to $A^1_0$ but $A''$ has only subalgebras of dimension 1 isomorphic to $A^1_1$.

(ad)  Suppose that $A''$ is isomorphic to $A^2_6$, and
let $\{e', g\}$ be a linear base of $A''$ such that 
$e'^2 = 0$, $e'g = ge' = e'$ and $g^2 = g$.
Because $Ke'$ is the unique subalgebra of $A''$ of dimension 1 
isomorphic to $A^1_0$, we have $Ke = Ke'$. %and so we can apply Lemma~\ref{lem:eg}}.

By Lemma~\ref{lem:eg}, (2) we have $ eg = ge = e, $
and by Lemma~\ref{lem:c&c'}, (2) we have $fg = ae + bf$ with $a, b \in K$
and
$$ e = eg = aef+be = be, $$
Hence, $b = 1$, that is, $fg = ae+f$. 
We have
$$ ae+f = fg = fg^2 = (ae+f)g = 2ae + f. $$
Hence, $a = 0$ and we see $fg = f$.  Similarly, we have $gf = f.$

These imply that $g$ is the identity element and $A$ is unital, and
we can exclude this algebra.

(b)  Suppose that $A'$ is isomorphic to $A^2_4$. 

(bb)  Suppose that $A''$ is also isomorphic to $A^2_4$.
%Then, $A' \cap A''$ is isomorphic to either $A^1_1$ or $A^1_0$.

(bb1)  Suppose that $A' \cap A'' \cong A^1_1$.  
Let $\{e, f\}$ and $\{e',g\}$ be linear bases of $A'$ and $A''$
such that $e^2 = e, ef = fe = f^2 = 0$ and $e'^2 = e', e'g= ge' = g^2 = 0$.
Then, $A' \cap A'' = Ke = Ke'$.  By Lemma~\ref{lem:eg}, (1) we have $eg = ge = 0$.
Moreover, by Lemma~\ref{lem:c&c'}, (1), we see $fg = gf = 0$.
Therefore, $A$ is defined by
\begin{equation}\label{eq:e00}
\begin{pmatrix}
e & 0 & 0 \\
0 & 0 & 0 \\
0 & 0 & 0
\end{pmatrix}\end{equation}.

(bb2)  Suppose that $A' \cap A'' \cong A^1_0$.
Let $\{e, f\}$ and $\{e',g\}$ be linear bases of $A'$ and $A''$
such that $e^2 = ef = fe = 0, f^2 = f$ and $e'^2 = e'g = ge' = 0, g^2 = g$.
Then, $A' \cap A'' = Ke = Ke'$.  
By Lemma~\ref{lem:eg}, (1) we have $ eg = ge = 0. $
Let $fg = ae + bf+ cg$ with $a, b, c \in K$, then by Lemma~\ref{lem:c&c'}, (3) we have
$ c = 0$ or $c = 1$, and $b = 0$ or $b = 1$.
If $c = 0$, then by (\ref{eq:ae+bf+cg}), we have $ae = afe = 0$.  Hence $a = 0$.
If $c = 1$, then by (\ref{eq:ae+bf+cg}) we have $bf = -afe = 0$.  Hence $b = 0$.
Because $ae = aeg = 0$ by (\ref{eq:ae+bf+cg2}), again we have $a = 0$.
Therefore, $fg = bf + cg$.
Similarly, we have $gf = b'f + c'g$ with $b', c' \in K$.

Thus $B = k\{f, g\}$ is a subalgebra of $A$ of dimension 2 such that
$eB = Be = \{0\}$ and $e^2 = 0$.  
Hence, $B$ is curled or isomorphic to $A^2_3$ or $A^2_4$, otherwise
$A$ is straight by Lemma~\ref{lem:g^2=g}, (2).  Because $f^2 = f$ and $g^2 = g$, $B$
cannot be isomorphic to $A^2_3$ nor $A^2_4$, and the only
possibility is that $B \cong  A^2_1$ or $B \cong A^2_2$.
Therefore, choosing a suitable base $\{f, g\}$ of $B$, $A$ can be defined by
\begin{equation}\label{eq:000}
\begin{pmatrix}
0 & 0 & 0 \\
0 & 0& 0 \\
0 & f & g
\end{pmatrix}\ \ 
\mbox{or} \ \ 
\begin{pmatrix}
0 & 0 & 0 \\
0 & 0& f \\
0 & 0& g
\end{pmatrix}.
\end{equation}
 
(bc)  Suppose that %$A'$ is isomorphic to $A^2_4$ and 
$A''$ is isomorphic to
$A^2_5$.  Let $\{e, f\}$ be a linear base of $A'$ such that
$e^2 = e$ and $ef = fe = f^2 = 0$.  Then, $A' \cap A'' = Ke \cong A^1_1$.
As observed in the previous section, $A''$ has exactly three subalgebras (1) $Ke$, (2) $K(e+f)$ and (3) $K(e-f)$ 
isomorphic to $A^1_1$ for the table (\ref{eq:effe}). % and $K(e+f)$, $Ke$ and $Kf$ for the table (\ref{eq:ef}).
Accordingly we have the following three possiblities.
%Because $A''$ has exactly three subalgebras isomorphic to $A^1_1$, we have three possibilities.

(bc1)  Let $\{e', g\}$ be a linear base of $A''$ such that 
$e'^2 = g^2 = e'$ and $e'g = ge' = g$. 
Suppose that $A' \cap A'' = Ke = Ke'$. By Lemma~\ref{lem:e'^2}, (2) and Lemma~\ref{lem:eg}, (3),  we have $ e' = e$ and $eg = ge = g$.
By Lemma~\ref{lem:fg=gf} we have $fg =fg = 0$. 
Because the subalgebra $A''$ is isomorphic to $A^2_5$, and 
$fA'' = A''f = \{0\}$ and $f^2 = 0$, $A$ is not waved by Lemma~\ref{lem:g^2=g}, (2).

(bc2)  Replacing $e'$ by $\frac{e+f}{2}$ and $g$ by $\frac{e-f}{2}$, let $\{e', g\}$ be a linear base of $A''$ such that 
$e'^2 = e', g^2 = g$ and $e'g = ge' = 0$ as in (\ref{eq:ef}), and suppose that $Ke = Ke'$.  
By Lemma~\ref{lem:eg}, (1) we have $eg = ge = 0,$
and by Lemma~\ref{lem:c&c'}, (1) we see $fg = bf$ and $gf = b'f$ with $b, b' \in K$.
Therefore, $B = K\{f, g\}$ is a subalgebra  of dimension 2.
Because $eB = Be = \{0\}$ and $e^2=e$, $B$ %is 
must be curled by Lemma~\ref{lem:g^2=g}, (1), because %added
otherwise $A$ %is 
would be straight.  Because $g^2 = g$, $B$ is isomorphic to either $A^2_1$ or $A^2_2$.
Hence, choosing a suitable base $\{f, g\}$ of $B$, $A$ is defined by
\begin{equation}\label{eq:e00-2}
\begin{pmatrix}
e & 0 & 0 \\
0 & 0 & 0 \\
0 & f & g
\end{pmatrix}
\ \ \mbox{or} \ \
\begin{pmatrix}
e & 0 & 0 \\
0 & 0 & f \\
0 & 0 & g
\end{pmatrix} 
\end{equation}

(bc3)  Similarly to (bc2), let $\{e', g\}$ be a linear base of $A''$ such that 
$e'^2 = e', g^2 = g$ and $e'g = ge' = 0$. %Let $\{e', g\}$ be the same linear base of $A''$ as in (bc2). 
Suppose that $Ke = Kg$.
Because the automorphism of $A''$ interchanging $e'$ and $g$ 
maps the subalgebra $Kg$ onto the subalgebra $Ke'$, 
the situation is essentially the same as (bc2) and we can omit this case.

(bd)  Suppose that %$A' = A^2_4$ and $A'' = A^2_6$.
$A''$ is isomorphic to $A^2_6$.

(bd1)  Suppose that $A' \cap A'' \cong A^1_0$.  
Let $\{e, f\}$ be a linear base of $A'$ such that $e^2=ef=fe=0$ and
$f^2=f$, and $\{e', g\}$ be a linear base of $A''$ such that
$e'^2 = 0, e'g = ge' = e'$ and $g^2 = g$. 
Then, $A'\cap A'' = Ke = Ke'$. 
By Lemma~\ref{lem:eg}, (2) we have $ eg = ge = e.$
Let $fg = ae+bf+cg$ with $a,b,c \in K$.  By Lemma~\ref{lem:c&c'}, (3) we see $b = 0$
or $b=1$.  We have
$$ 0 = efg = e(ae+bf+cg) = ce. $$
Hence, $c=0$.  We have
$$ ae+bf = fg = f^2g = f(ae+bf) = bf. $$
Hence, $a=0$, and we see $fg = bf$, where $b = 0$ or $b=1$.  Similarly,
$gf = b'f$ and $b'=0$ or $b'=1$.  Because
$$ bf = bf^2 = fgf = b'f^2 = b'f, $$
we see $b = b'$.  Therefore, we see $fg = gf = 0$ or $fg = gf = f$.
Consequently, $A$ is defined by
\begin{equation}\label{eq:00e-2}
\begin{pmatrix}
0 & 0 & e \\
0 & f & 0 \\
e & 0 & g
\end{pmatrix}
\ or \ 
\begin{pmatrix}
0 & 0 & e \\
0 & f & f \\
e & f & g
\end{pmatrix}.
\end{equation}

(bd2)  Suppose that $A' \cap A'' \cong A^1_1$.
 Let $\{e, f\}$ be a linear base of $A'$ such that $e^2 = e$ and
$ef = fe = f^2 = 0$.  Let $\{e', g\}$ be a linear base of $A''$ such that 
$e'^2 = e'$, $e'g =ge' = g$ and $g^2 = 0$.  
Then, $Ke = Ke'$.  Then by Lemma~\ref{lem:eg}, (3) and Lemma~\ref{lem:e'^2}, (2) we have
$ eg = ge = g. $
Moreover, by Lemma~\ref{lem:fg=gf}, we see $fg = gf = 0$.
Because the subalgebra $A''$ is isomorphic to $A^2_6$, and
$fA'' = A''f = \{0\}$ and $f^2 = 0$, $A$ is not waved by Lemma~\ref{lem:g^2=g}, (2). 

(c) Suppose that $A'$ is isomorphic to $A^2_5$.

(cc)  Suppose that $A''$ is also isomorphic to $A^2_5$.
As mentioned in (bc), $A''$ has exactly three subalgebras (1) $Ke$, (2) $K(e+f)$ and (3) $K(e-f)$ 
isomorphic to $A^1_1$ for (\ref{eq:effe}). Therefore, $A' \cap A'' \cong A^1_1$ and
we have the three possible combinations: (1) and (1), (1) and (2), and (2) and (2), where (3) is similar to (2) and so can be neglected.
Accordingly, we have the following three cases.
%We need to consider three cases.

(cc1)  Let $\{e, f\}$ and $\{e', g\}$ be linear bases of $A'$ and $A''$, 
respectively such that $e^2=f^2=e$, $ef=fe=f$, $e'^2=g^2 =e'$ and
$e'g = ge' = g$.
If $A' \cap A'' = Ke = Ke'$, then
$A$ is unital by Corollary~\ref{cor:unital}, and we can exclude this case.

(cc2)  Let $\{e, f\}$ and $\{e', g\}$ be linear bases of $A'$ and $A''$,  
respectively such that $e^2=f^2=e$, $ef=fe=f$, $e'^2=e'$, $g^2=g$ and
$e'g = ge' = 0$.  Suppose that $Ke = Ke'$.
By Lemma~\ref{lem:eg}, (1) we have 
$eg = ge = 0,$ and by Lemma~\ref{lem:fg=gf} we see $fg = gf = 0$.  Thus,
$A$ is not waved by Lemma~\ref{lem:g^2=g}, (1), because the subalgebra $A'$ is straight, 
and $gA' = A'g = \{0\}$ and $g^2 = g$.

(cc3) Let $\{e, f\}$ a linear base of $A'$ with
$e^2 = e, f^2 = f$ and $ef = fe = 0$, and let $\{e', g\}$ be 
a linear base of $A''$ with $e'^2 = e', g^2=g$ and $e'g=ge' = 0$.
Suppose that $A' \cap A'' = Ke = Ke'$,
% that is, $e' = ke$ for some $k \inK setminus \{0\}$.  
By Lemma~\ref{lem:eg}, (1) we see $eg = ge = 0.$
Let $fg = ae+bf+cg$ with $a, b, c \in K$.  We have
$$0 = efg = e(ae+bf+cg) = ae, $$
and hence $a = 0$, that is, $fg = bf + cg$.  Similarly, $gf = b'f + c'g$ with $b', c' \in K$.
Hence, $B = \{f,g\}$ is a subalgebra of $A$ of dimension 2 such that $eB = Be = \{0\}$.
Because $e^2 = e$, $B$ must be curled by Lemma~\ref{lem:g^2=g}, (1).  Because $f^2 = f$,
$B$ is isomorphic to $A^2_1$ or $A^2_2$.  Thus, choosing a suitable base $\{f, g\}$
of $B$, $A$ is defined by one of the tables in (\ref{eq:000}), and we get no new algebra here.

(cd)  Suppose that $A''$ is isomorphic to $A^2_6$.  Then, $A' \cap A'' \cong A^1_1$.  
Let $\{e', g\}$ be a linear base of $A''$ such that $e'^2 = e'$,
$e'g =ge' = g$ and $g^2 = 0$.  Because $Ke'$ is the unique subalgebra of dimension 1
isomorphic to $A^1_1$, we see $A' \cap A'' = Ke'$.  

(cd1)  Let $\{e, f\}$ be a linear base of $A'$ such that $e^2=f^2=e$
and $ef = fe = f$, and suppose that $Ke = Ke'$.
Then by Corollary~\ref{cor:unital}, $A$ is unital.

(cd2)  Let $\{e, f\}$ be a linear base of $A'$ such that $e^2=e$, $f^2=f$
and $ef=fe=0$. 
Suppose that $A' \cap A'' = Ke = Ke'$.
By Lemma~\ref{lem:eg}, (3) and Lemma~\ref{lem:e'^2}, (2) we have
$eg = eg = g,$
and by Lemma~\ref{lem:fg=gf} we see $fg = gf = 0$.
But $A$ is not waved by Lemma~\ref{lem:g^2=g}, (1), because
the subalgebra $B = K\{e,g\}$ is straight,
$fB = Bf = \{0\}$ and $f^2 = f$.

The case $A' \cap A'' = Kf$ is symmetric, which we can omit.

(dd)  Suppose that both $A'$ and $A''$ are isomorphic to $A^2_6$.
As observed in the previous section, $A^2_6$ has the exactly 2 subalgebras $Ke\cong A^1_1$
and $Kf\cong A^1_0$. Therefore we have the following two cases.

(dd1)  Suppose that $A' \cap A'' \cong A^1_1$.  
Let $\{e, f\}$ and $\{e', g\}$ be linear bases of $A'$ and
$A''$, respectively such that $e^2 = e, ef = fe = f$
and $f^2 = 0$, and $e'^2 = e', e'g = ge' = g$ and $g^2 = 0$.
Then, $A' \cap A'' = Ke = Ke'$.
By Corollary~\ref{cor:unital}, $A$, is unital and we exclude this case.

(dd2)  Suppose that $A' \cap A'' \cong A^1_0$.
Interchanging $e$ and $f$, and $e'$ and $g$,
let $\{e, f\}$ and $\{e', g\}$ be linear bases of $A'$ and $A''$
such that $e^2=e'^2=0, f^2 = f, ef=fe=e$, $g^2=g$ and $e'g = ge' = e'$. 
Then, $A' \cap A'' = Ke = Ke'$.  Then,
By Lemma~\ref{lem:eg}, (2) we have
$ eg = ge = e. $
Let $fg = ae+bf+cg$ with $a,b,c \in K$.  
We have
$$ e = eg = efg = e(ae+bf+cg) = (b + c)e. $$
It follows that 
\begin{equation}\label{eq:b+c} b + c = 1. \end{equation}
 We have
$$ ae+bf+cg = fg = ffg = f(ae+bf+cg) = ae+bf+c(ae+bf+cg). $$
Hence, 
\begin{equation} a = a(c+1), b = b(c+1) \ \ \mbox{and} \ \  c^2 = c. \end{equation}
Similarly, we have
\begin{equation}\label{eq:a=a(b+1)} a = a(b+1),  c = c(b+1)\ \ \mbox{and} \ \ b^2 = b, \end{equation}
By (\ref{eq:b+c}) -- (\ref{eq:a=a(b+1)}) we see that $a = 0$, and $b = 1, c = 0$ or $b = 0, c = 1$, that is,
$fg = f$ or $fg = g$.  Similarly, we see $gf = f$ or $gf = g$.  Thus, $A$ is defined by

\begin{equation}\label{eq:0ee-4}
\begin{pmatrix}
0 & e & e \\
e & f & f \\
e & f & g
\end{pmatrix},
\ 
\begin{pmatrix}
0 & e & e \\
e & f & f \\
e & g & g
\end{pmatrix},
\ 
\begin{pmatrix}
0 & e & e \\
e & f & g \\
e & f & g
\end{pmatrix}
\ or \ 
\begin{pmatrix}
0 & e & e \\
e & f & g \\
e & g& g
\end{pmatrix}.
\end{equation}

(r)  Finally, suppose that $K = \mathbb{R}$ and $A'$ is isomorphic to
$A^2_{5^-}$.  Let $\{e, f\}$ be a linear base of $A'$ such that
$e^2 = -f^2 = e$ and $ef = fe = f$.  Then $Ke$ is the unique subalgebra
of $A'$ of dimension 1, which is isomorphic to $A^1_1$.

(ra) Suppose that $A'' \cong A^2_3$. However, this case is impossible because $A^2_3$ has the unique subalgebra of dimension 1 isomorphic to $A^1_0$.

(rb) Suppose that $A'' \cong A^2_4$.  Let $\{e', g\}$ be a linear base
of $A''$ such that $e'^2 = e'$ and $e'g = ge' = g^2 = 0$.
Then,
$ A' \cap A'' = Ke = Ke'$.  
We have $eg = ge = 0,$
by Lemma~\ref{lem:eg}, (1) and $fg = gf = 0 $ by Lemma~\ref{lem:fg=gf}.
Because $gA' = A'g = \{0\}$ and $g^2 = 0$,
$A$ is not waved by Lemma~\ref{lem:g^2=g}, (2).
%$f+g$, $(f+g)^2 = -e$ and $(f+g)^3 = -f$
%are linearly independent. 
%Thus $A$ is defined by
%\begin{equation}\begin{pmatrix}
%e\ & f & 0 \\
%f\ & -e & 0 \\
%0\ & 0 & 0 
%\end{pmatrix}.\end{equation}

(rc)  Suppose that $A'' \cong A^2_5$.  

(rc1)  Let $\{e', g\}$ be a linear base
of $A''$ such that $e'^2 = g^2 = e'$ and $e'g = ge' = g$.
Suppose that $A' \cap A'' = Ke'$.
By Corollary~\ref{cor:unital} $A$ is unital, and we exclude this case.

(rc2)  Let $\{e', g\}$ be a linear base of $A''$ such that
$e'^2 = e', g^2=g$ and $e'g=ge'=0$.  Suppose that
$A' \cap A'' = Ke = Ke'$.  We have
$eg = eg = 0$ by Lemma~\ref{lem:eg}, (1) 
and $fg = gf = 0$ by Lemma~\ref{lem:fg=gf}.  Since $gA' = A'g' = \{0\}$ and $g^2 = g$, 
$A$ is not waved by Lemma~\ref{lem:g^2=g}, (1)

(rd)  Suppose that $A'' = A^2_6$.  Let $\{e', g\}$ be a linear base of 
$A''$ such that $e'^2 = e', e'g = ge' = g$ and $g^2 = 0$.
Then $A' \cap A'' = Ke = Ke'$.
By Corollary~\ref{cor:unital} $A$ is unital.  
We exclude this case. 

(rr)  Suppose that $A''$ is also isomorphic to $A^2_{5^-}$.
Let $\{e', g\}$ be a linear base of $A''$ such that $e'^2 = -g^2 = e'$ and
$e'g = ge' = g$.  Then, $Ke = Ke'$.
By Lemmas~\ref{lem:eg} and \ref{lem:e'^2} we have
$ eg = ge = g. $
Let $fg = ae+bf+cg$.  We have
$$\begin{array}{lll}
-g & = & -eg = f^2g = f(ae+bf+cg) = af-be+c(ae+bf+cg) \\
& = & (ac-b)e + (bc+a)f + c^2g.
\end{array}$$
Hence, $c^2 = -1$, but there is no such $c$ in $\mathbb{R}$.

\section{Waved algebras (screening)}\label{sec:w-screen}
In the previous section, we have enumerated all possible waved algebras of dimension 3. 
In this section, we shall investigate whether each two of them are isomorphic or not.

(aa1)  Let $W^3_1$ denote the algebra defined by (\ref{eq:0e0}).
Interchanging $f$ and $g$ in the base $\{e, f, g\}$, 
$W^3_1$ is defined by
\begin{equation*}\begin{pmatrix}
0 & 0 & 0 \\
0 & 0 & 0 \\
0 & 0 & e
\end{pmatrix}.\end{equation*}

(aa2)  Next, let $W^3_2$ be the algebra defined by (\ref{eq:e0e0}).
Let $f' = f-g$, then we have
$$ ef' = ef - eg = 0,\, f'e = fe - ge = 0,\, f'^2 = f^2-fg-gf+g^2 = 0, $$
and
$$ f'g = fg - g^2 = 0,\, gf' = gf - g^2 = e.  $$
Hence, replacing $f$ by $f'$, $W^3_2$ is defined by
\begin{equation*}\begin{pmatrix}
0 & 0 & 0 \\
0 & 0 & 0 \\
0 & e & 0
\end{pmatrix}.\end{equation*}

(aa3)  We denote the algebra defined by (\ref{eq:e0kee}) by $W^{3}_3(k)$ for
$k \in K$.
First, we see that $W^3_3(-k)$ is isomorphic to $W^3_3(k)$ via the isomorphism sending 
$e$ to $e$, $f$ to $f$ and $g$ to $-g$.

To prove the converse we may assume that $K = \mathbb{C}$, because if 
$W^3_3(k)$ and $W^3_3(\ell)$ are not isomorphic over $\mathbb{C}$, they are not 
isomorphic over $\mathbb{R}$.
Assume that $\phi: W^3_3(k) \rightarrow W^3_3(\ell)$ is an isomorphism.
Let $ e' = \phi(e), f' = \phi(f)$ and $g' = \phi(g)$.
Because $Ke$ is the left annihilator of  $W^3_3(k)$ (and of $W^3_3(\ell)$),
we see $Ke = Ke'$, that is, 
$$ e' = me $$ for some $m \in K \setminus\{0\}$.  Write
$$ f'= ze + xf + yg \ \ \mbox{and} \ \ g' = we+uf+vg $$ 
with $x, y, z, u, v, w \in K$.  
Then, we have
$$ me = e' = \phi(e) = \phi(f^2) = f'^2 = (x^2+y^2+kxy)e, $$
$$ me = e' = g'^2 = (u^2+v^2+kuv)e $$
$$ 0 = f'g' = (xu+yv+kyu)e $$
and
$$ m\ell e = \ell e' = g'f' = (xu+yv+kxv)e. $$
Thus replacing $x, y, u$ and $v$ by $x/\sqrt{m}$, $y/\sqrt{m}$,
$u/\sqrt{m}$ and $u/\sqrt{m}$, respectively, we have
\begin{equation}\label{eq:x^2+y^2+kxy}
x^2 + y^2 + kxy = 1, \ u^2+v^2+kuv = 1, \ xu+yv+kyu = 0
\end{equation}
and
\begin{equation}\label{eq:xu+uv+kxv}
xu + yv + kxv = \ell.
\end{equation}
Because we have the identity 
$$\begin{array}{lll}
(y^2 - u^2)v & = & u^2v(x^2 + y^2 + kxy -1) + uy(x + ky)(u^2+v^2+kuv - 1) \\
& & \hspace{2.6cm} + (y - uvx - u^2y - kuvy)(xu+yv+kyu), \\
\end{array}$$
we find
\begin{equation*} v(y-u)(y+u) = 0 \end{equation*}
by (\ref{eq:x^2+y^2+kxy})

If $v = 0$, then by (\ref{eq:x^2+y^2+kxy}) we have
$$ u^2 = 1, \ x+ky = 0, \ y^2 =1 \ \ \mbox{and} \ \ x^2 = k^2. $$
Hence, by (\ref{eq:xu+uv+kxv})
$$ \ell = xu = \pm k. $$
If $y = u$, then by (\ref{eq:x^2+y^2+kxy})
$$
(x+v+ku)u = 0.
$$
Here, if $u = 0$, then $y = 0$ and $x^2 = v^2 = 1$ by (\ref{eq:x^2+y^2+kxy}).  Hence, by (\ref{eq:xu+uv+kxv})
$$ \ell = kxv = \pm k. $$
If $x+v+ku = 0$, then
$$ \ell = xu + uv + kxv = -(v+ku)(u+kv) + uv = -k(u^2+v^2+kuv) = -k $$
by (\ref{eq:x^2+y^2+kxy}) and (\ref{eq:xu+uv+kxv}).
If $y = -u$, we can show $ \ell = \pm k$ in a similar manner.

Thus, we have proved that $W^3_3(k)$ and $W^3_3(\ell)$  are isomorphic if and only if
$\ell = \pm k$.

(aa4)  We denote the algebra defined by (\ref{eq:e0ke-e}) by $W^{3}_{3^-}(k)$ for $k \in
\mathbb{R}$.  For any $k \in \mathbb{R}$,
$W^3_{3^-}(k)$ and $W^3_{3^-}(-k)$ are isomorphic via 
the isomorphism sending $e$ to $e$, $f$ to $f$ and $g$ to $-g$.

We shall show that $W^3_{3^-}(k)$ is not isomorphic to $W^3_{3}(k')$ 
for any reals $k$ and $k'$ over $\mathbb{R}$.   
If they were isomorphic, we would have elements 
$f', g' \in W^3_{3^-}(k)$ such that $f'^2 = g'^2 \neq 0$ and $f'g' = 0$.
Let $f' = ze+xf+yg$ and $g' = we +uf+vg$ with $x,y,z,u,v, w \in \mathbb{R}$.
Then we have
\begin{equation}\label{eq:x^2-y^2+kxy}
x^2-y^2+kxy = u^2-v^2+kuv \neq 0 
\ \ \mbox{and} \ \ 
 xu - yv + kyu = 0 
\end{equation}
If $y = 0$, then by (\ref{eq:x^2-y^2+kxy}), $x^2 = u^2 - v^2 + kuv \neq 0$ and $xu = 0$.
Hence, $u = 0$ and 
\begin{equation}\label{eq:x^2=-v^2} x^2 = - v^2 (\neq 0) \end{equation}
The left-hand side of (\ref{eq:x^2=-v^2}) is positive but the right-hand side is negative, a contradiction.
If $y \neq 0$, then by (\ref{eq:x^2-y^2+kxy}) $v = \frac{(x+ky)u}{y}$ and 
$$\begin{array}{lll}
x^2-y^2+kxy & = & u^2 - \frac{(x+ky)^2u^2}{y^2} + \frac{k(x+ky)u^2}{y} \\
& = & -\frac{u^2}{y^2}(x^2-y^2+kxy) \neq 0.
\end{array}$$
This is again impossible, and $W^3_{3^-}(k)$ and $W^3_3(k')$ can not be isomorphic.

On the other hand,
$W^3_{3^-}(k)$ is isomorphic to $W^3_3({\bf i}k)$ over $\mathbb{C}$ via the transformation matrix
$\begin{pmatrix}
1 & 0 & 0 \\
0 & 1 & 0 \\
0 & 0 & -{\bf i}
\end{pmatrix}.$
Hence, for distinct positive numbers $k$ and $k'$,
$W^3_{3^-}(k)$ and $W^3_{3^-}(k')$ are not isomorphic over $\mathbb{C}$,
because $W^3_3({\bf i}k)$ and $W^3_3({\bf i}k')$ are not isomorphic, and
hence they are not isomorphic over $\mathbb{R}$, a fortiori. 

(bb1)  We denote the algebra defined by the table in (\ref{eq:e00}) by
$W^3_4$.

(bb2)  
%The algebra defined by the first, second and last tables in (76) 
%are not waved by Lemma 7.6,(1).  
%In fact, $B = K\{f,g\}$ are subalgebras of them
%isomorphic to $A^2_5$ and $eB = Be = e^2 = 0$.
%
%In fact, $e+f+g$, $(e+f+g)^2 = f+3g$ and $(e+f+g)^3 = f+5g$ 
%are linearly independent.
%Similarly, algebra defined by the last table is not waved.
%The algebra defined by the third table can be defined by
%\begin{equation}\begin{pmatrix}
%0 &\ 0 & 0 \\
%0 &\ 0 & 0 \\
%0 &\ f & g
%\end{pmatrix},\end{equation}
%by replacing $f$ by $f-g$.
%Similarly, the algebra defined by the fourth table can be defined by
%\begin{equation}\begin{pmatrix}
%0 &\ 0 & 0 \\
%0 &\ 0 & f \\
%0 &\ 0 & g
%\end{pmatrix}.\end{equation}
The algebras defined by the first table and the second table in (\ref{eq:000}) are denoted by $W^3_5$ 
and $W^3_6$, respectively.

(bc2)
We denote the algebras defined by the first and the second tables in (\ref{eq:e00-2})  by
$W^3_7$ and $W^3_8$, respectively.

(bd1)  The algebra defined by the first table in (\ref{eq:00e-2}) is not waved.
In fact, $e+f-g$, $(e+f-g)^2 = -2e+f+g$ and $(e+f-g)^3=3e+f-g$
are linearly independent.  The algebra defined by the second table in (\ref{eq:00e-2}) is 
defined by the first one replacing $g$ by $g-f$. 

(dd2)
The algebra defined by the first (resp. last) table in (\ref{eq:0ee-4}), $g$ (resp. $f$) is the identity element.
Hence, they are unital.
Let $W^3_9$ (resp. $W^3_{10}$) be the algebra defined by the second (resp, third) table in (\ref{eq:0ee-4}).
In the second table, let $g' = g - f$, the we have $eg' = g'e = fg' = g'^2 = 0$ and
$g'f = g'$.  Thus, replacing  $g$ by $g'$, $W^3_9$ is defined by
\begin{equation*}\begin{pmatrix}
0 & e & 0 \\
e & f & 0 \\
0 & g & 0
\end{pmatrix}.\end{equation*}
In the third table, let $g' = g - f$, then we have $eg' = g'e = g'f = g'^2 = 0$ and
$fg' = g'$.  Replacing  $g$ by $g'$, $W^3_{10}$ is defined by
\begin{equation*}\begin{pmatrix}
0 & e & 0 \\
e & f & g \\
0 & 0 & 0
\end{pmatrix}.\end{equation*}

In summary, any non-unital waved algebra $A$ of dimension 3 over 
$\mathbb{C}$ is isomorphic to $W^3_1$, $W^3_2$, $W^3_4$, $W^3_5$,
$W^3_6$, $W^3_7$, $W^3_8$, $W^3_9$, $W^3_{10}$ or $W^3_3(k)$ with 
$k \in \mathcal{H}%\mathbb{C}_1 
= \{x+y{\bf i} \in \mathbb{C} \,|\, x>0 \ \mbox{or}\ x=0, y\geq 0\}$.
Over $\mathbb{R}$, $A$ can be isomorphic to $W^3_{3^-}(k)$ with
$k \geq 0$ other than above.  We shall show these algebras are not 
isomorphic to each other.  As we already showed that $W^3_{3}(k)$ and
$W^3_{3}(k')$ are not isomorphic for distinct $k, k' \in \mathcal{H}$, %\mathbb{C}_1$,
 $W^3_{3^-}(k)$ and $W^3_{3^-}(k')$ are not
isomorphic over $\mathbb{R}$ for any distinct
nonnegative $k$ and $k'$.  Moreover, $W^3_{3}(k)$ and $W^3_{3^-}(k')$
are not isomorphic over $\mathbb{R}$ for any nonnegative $k$ and $k'$.

We use the same notations as in Section~\ref{sec:curled} for an algebra $A$ over $K$;
$$
{\rm la}(A) = \{x \in A \,|\, xA = 0 \}, \ 
{\rm ra}(A) = \{x \in A \,|\, Ax = 0 \}, $$
and
$$ \alpha(A) = \dim_K A^2,\ \beta(A) = \dim_K {\rm la}(A),\  
\gamma(A) = \dim_K {\rm ra}(A). $$

We see
$$ \alpha(W^3_1) = \alpha(W^3_2) = \alpha(W^3_3(k)) = \alpha(W^3_{3^-}(k))
= \alpha(W^3_4) = 1, $$
$$ \alpha(W^3_5) = \alpha(W^3_6) = 2 \ \ \mbox{and} \ \ \alpha(W^3_7)
= \alpha(W^3_8) = \alpha(W^3_9) = \alpha(W^3_{10}) = 3. $$
Moreover,
$$ \beta(W^3_1) = \beta(W^3_2) = \beta(W^3_4) = \beta(W^3_5) = 2, $$
$$ \beta(W^3_3(k)) = \beta(W^3_{3^-(k)}) = \beta(W^3_6) = \beta(W^3_7) 
= \beta(W^3_{10}) = 1, \ \beta(W^3_8) = \beta(W^3_9) = 0,$$
$$ \gamma(W^3_1) = \gamma(W^3_2) = \gamma(W^3_4) = \gamma(W^3_6) = 2, $$
and
$$ \gamma(W^3_3(k)) = \gamma(W^3_{3^-}(k)) = \gamma(W^3_5) = \gamma(W^3_8) 
= \gamma(W^3_9) = 1, \ \gamma(W^3_7) = \gamma(W^3_{10}) = 0. $$
These values are summarized in the table shown below.
\begin{center}
\begin{tabular}{|c||c|c|c|c|c|c|c|c|c|c|c|} \hline
            & $W^3_1$ & $W^3_2$ &$W^3_3(k)$ &$W^3_4$ &$W^3_5$ &$W^3_6$ &$W^3_7$ &$W^3_8$ &$W^3_9$ &$W^3_{10}$ &$W^3_{3^-}(k)$ \\ \hline\hline
$\alpha$ & 1&1&1&1&2&2&3&3&3&3&1\\ \hline
$\beta$ & 2&2&1&2&2&1&1&0&0&1&1\\ \hline
$\gamma$ & 2&2&1&2&1&2&0&1&1&0&1\\ \hline
\end{tabular}
\end{center}
 
This table implies that $W^3_3(k)$, $W^3_{3^-}(k)$, $W^3_5$ and $W^3_6$ 
are not isomorphic to any of the others. 
$W^3_1$, $W^3_2$ and $W^3_4$ are not isomorphic to any algebra 
in the other group.  $W^3_2$ is not isomorphic to $W^3_1$ nor to $W^3_4$
because $W^3_1$ and $W^3_4$ are commutative but $W^3_2$ is not.
$W^3_1$ and $W^3_4$ are not isomorphic because $W^3_4$ has %an 
a nonzero idempotent $e$;
$e^2 = e$ but $W^3_1$ has no nonzero idempotent.
Finally, $W^3_7$, $W^3_8$, $W^3_9$ and $W^3_{10}$ are not
isomorphic to any algebra in the other group.
$W^3_9$ and $W^3_{10}$ have two zeropotent elements $e$ and $g$;
$e^2 = g^2 = 0$ but $W^3_7$ and $W^3_8$ have no two zeropotent elements
that are linearly independent.  Hence, these algebras are not isomorphic to each other.

We have proved that all the algebras are not isomorphic to each other.

\section{Summary}\label{sec:summary}
By summarizing the results in Sections~\ref{sec:unital}, \ref{sec:curled}, \ref{sec:straight} and \ref{sec:w-screen}, we list up all the multiplication tables of
three-dimensional algebras. It is easy to see that all the tables satisfy associativity, that is, $(e_ie_j)e_k=e_i(e_je_k)$ for all $i,j,k\in\{1,2,3\}$ where $e_1=e$, $e_2=f$ and $e_3=g$.

Over $\mathbb{C}$, we have, up to isomorphism, 
exactly 5 unital algebras $U^3_0, U^3_1, U^3_2, U^3_3, U^3_4$ defined by
$$
\begin{pmatrix}
e & f & g \\
f & 0 & 0 \\
g & 0 & 0
\end{pmatrix}, \ 
\begin{pmatrix}
e & f & g \\
f & 0 & f \\
g & -f & e
\end{pmatrix}, \ 
\begin{pmatrix}
e & 0 & 0\\
0 & f & 0\\
0 & 0 & g
\end{pmatrix}, \ 
\begin{pmatrix}
e & 0 & 0\\
0 & f & g\\
0 & g & 0
\end{pmatrix}, \ 
\begin{pmatrix}
e & f & g \\
f & g & 0 \\
g & 0 & 0
\end{pmatrix},
$$
respectively, exactly 5 non-unital curled algebras $C^3_0, C^3_1, C^3_2, C^3_3, C^3_4$
defined by
$$
\begin{pmatrix}
0 & 0 & 0 \\
0 & 0 & 0 \\
0 & 0 & 0
\end{pmatrix}, \ 
\begin{pmatrix}
0 & 0 & 0 \\
0 & 0 & e \\
0 & -e & 0
\end{pmatrix}, \
\begin{pmatrix}
0 & 0 & 0 \\
e & f & 0 \\
0 & g & 0
\end{pmatrix}, \ 
\begin{pmatrix}
0 & 0 & 0 \\
0 & 0 & 0 \\
e & f & g
\end{pmatrix}, \
\begin{pmatrix}
0 & 0 & e \\
0 & 0 & f \\
0 & 0 & g
\end{pmatrix},
$$ 
respectively, exactly 4 non-unital straight algebras $S^3_1, S^3_2, S^3_3,
S^3_4$ defined by
$$
\begin{pmatrix}
f\ & g & 0 \\
g\ & 0 & 0 \\
0\ & 0 & 0 
\end{pmatrix},\ 
\begin{pmatrix}
e\ & 0 & 0 \\
0\ & g & 0 \\
0\ & 0 & 0 
\end{pmatrix},\ 
\begin{pmatrix}
e\ & f & 0 \\
f\ & e & 0 \\
0\ & 0 & 0 
\end{pmatrix},\ 
\begin{pmatrix}
e\ & f & 0 \\
f\ & 0 & 0 \\
0\ & 0 & 0  
\end{pmatrix},
$$ 
respectively, and exactly 9 non-unital waved algebras $W^3_1$, $W^3_2$, 
$W^3_4$, $W^3_5$, $W^3_6$, $W^3_7$, $W^3_8$, $W^3_9$, $W^3_{10}$ 
defined by
$$
\begin{pmatrix}
0 & 0 & 0 \\
0 & 0 & 0 \\
0 & 0 & e
\end{pmatrix},\ 
\begin{pmatrix}
0 & 0 & 0 \\
0 & 0 & 0 \\
0 & e & 0
\end{pmatrix},\ 
\begin{pmatrix}
e & 0 & 0 \\
0 & 0 & 0 \\
0 & 0 & 0
\end{pmatrix},\ 
\begin{pmatrix}
0 & 0 & 0 \\
0 & 0 & 0 \\
0 & f & g
\end{pmatrix},\ 
\begin{pmatrix}
0 & 0 & 0 \\
0 & 0 & f \\
0 & 0 & g
\end{pmatrix},
$$
$$
\begin{pmatrix}
e & 0 & 0 \\
0 & 0 & 0 \\
0 & f & g
\end{pmatrix},\ 
\begin{pmatrix}
e & 0 & 0 \\
0 & 0 & f \\
0 & 0 & g
\end{pmatrix},\ 
\begin{pmatrix}
0 & e & 0 \\
e & f & 0 \\
0 & g & 0
\end{pmatrix},\ 
\begin{pmatrix}
0 & e & 0 \\
e & f & g \\
0 & 0 & 0
\end{pmatrix},
$$
respectively and one infinite family $\big\{W^3_3(k)\big\}_
{k \in \{x+y{\bf i} \,|\, x>0 \ \mbox{or}\ x=0, y\geq 0\}}$
of non-unital waved algebras defined by
$$
\begin{pmatrix}
0 & 0 & 0 \\
0 & e & 0 \\
0 & ke & e
\end{pmatrix}.
$$

Over $\mathbb{R}$, in addition to the above algebras, we have one unital algebra 
$U^3_{2^-}$, one non-unital straight algebra $S^3_{3^-}$ and one infinite family 
$\big\{W^3_{3^-}(k)\big\}_{k \geq 0}$ of non-unital waved algebras
defined by
$$
\begin{pmatrix}
e & 0 & 0\\
0 & f & g\\
0 & g & -f
\end{pmatrix},\ 
\begin{pmatrix}
e\ & f & 0 \\
f\ & -e & 0 \\
0\ & 0 & 0 
\end{pmatrix},\   
\begin{pmatrix}
0 & 0 & 0 \\
0 & e & 0 \\
0 & ke & -e
\end{pmatrix},
$$
respectively.

We remark that zeropotent algebras are only $C^3_0$ and $C^3_1$ which are alternative matrices.

An algebra is {\it indecomposable}, if it is not isomorphic to 
a direct sum of nontrivial subalgebras. We will not go into the details, but it is possible to show that
%The 
the algebras $U^3_0, U^3_1, U^3_4, C^3_1, C^3_2, C^3_3, C^3_4, S^3_1, 
W^3_2, W^3_3(k)$, $W^3_{3^-}(k)$, $W^3_9, W^3_{10}$ are indecomposable, 
and the others are not.

Peirce \cite{P} listed five families of "pure" algebras of dimension 3.  They correspond
to our $U_4, S_1, W_3(k), W_2$ and $C_1$.  The list of unilal algebras
of dimension 3 given by Scheffer \cite{Sch} and 
Study \cite{St} is in accordance with our list.

%------------------------------ References
\bibliographystyle{amsplain}

\noindent
Y. Kobayashi, K. Shirayanagi, and M. Tsukada\\
Department of Information Science, Toho University, Miyama 2-2-1 Funabashi, Chiba 274-8510, Japan\\ kobayasi@is.sci.toho-u.ac.jp, kiyoshi.shirayanagi@is.sci.toho-u.ac.jp, and tsukada@is.sci.toho-u.ac.jp\\

\noindent
S.-E. Takahasi\\
Laboratory of Mathematics and Games, Katsushika 2-371 Funabashi, Chiba 273-0032 Japan\\
sin\_ei1@yahoo.co.jp

\end{document}